\newcommand{\subC}[1]{\accentset{\leftrightsquigarrow}{#1}}
\newtheorem{theorem}{Theorem}[section]
\newtheorem{corollary}{Corollary}[theorem]
\newtheorem{lemma}[theorem]{Lemma}
\theoremstyle{remark}
\newtheorem*{remark}{Remark}
\title{On the Average Resistance of $n$-circuits}
\date{October 2024}
\author{
  Mehdi Nikopour Deilami\\
  \and
  Bohdan Zhelyabovskyy\\
}
\begin{document}
\maketitle

\begin{abstract}
\normalsize
    $n$-circuits are series-parallel networks composed of exactly $n$ unit resistors. This paper is focused on evaluating the mean resistance of all $n$-circuits, $M_n$, establishing that it lies between $1$ and $4.3954$ for all $n$. We ultimately conjecture that  $M_n$ converges to $1.25$ as $n$ grows, based on computational analysis and other intuitive arguments. Although the number of $n$-circuits has been explored quite thoroughly, this paper also provides complete proofs of some important results.
\end{abstract}

\begin{figure}[H]
    \centering
    \includegraphics[scale=0.08]{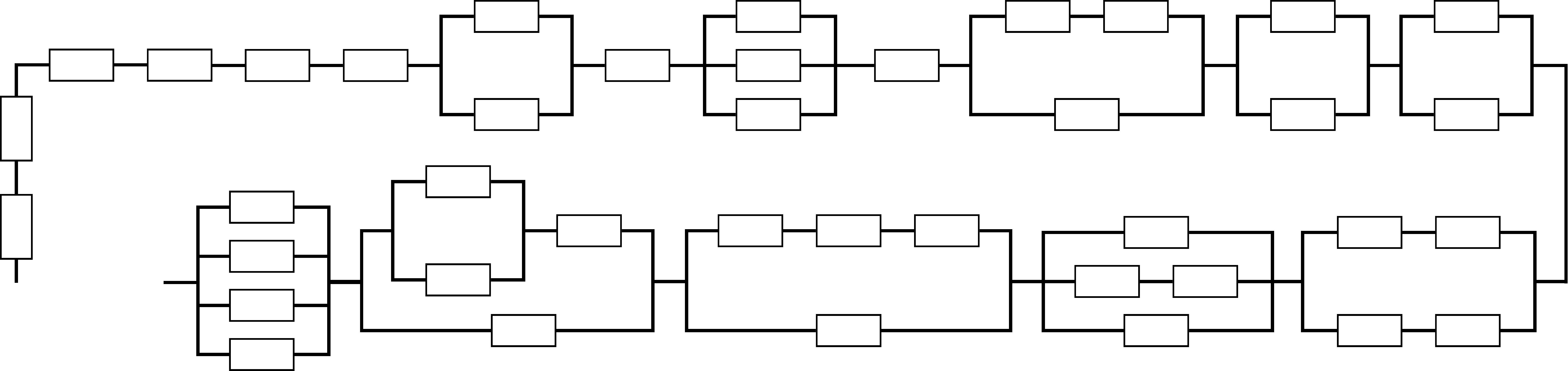}
    \caption{$\mathcal{O}_4$ \textemdash{} the $4$-omnicircuit.}
    \label{omni4}
\end{figure}
\textbf{Keywords:} Series-Parallel Networks, Enumerative Combinatorics, Average Resistance, Yoke-Chains, Generating Functions
\newpage

\section*{Introduction}
The enumeration of series-parallel networks was first investigated by MacMahon more than a century ago \textemdash{} see \cite{mm2} and \cite{macmahon}. Since then, a few papers have been published on the topic, mainly concerned with enumeration and relations with Graph Theory. For example, there is a bijection between $n$-circuits and unlabeled trees with $n$ leaves in which no node has exactly one child \textemdash{} see \cite[A000084]{oeis}. What has yet to be fully investigated, however, is the main physical property: the resistances of such circuits.

It is well known that the total resistance of series circuits can be determined by summing the individual resistances of their components, whereas for parallel circuits this is achieved by reciprocating the sum of the reciprocals. The observation that series and parallel connections can be used to increase and decrease the total resistance of a circuit, respectively, naturally gives rise to the question:

\centerline{\emph{``What is the average resistance of all circuits}}
\centerline{\emph{consisting of a fixed number of identical resistors?''}}

What even further motivates this exploration is that were one willing to hand-calculate the first 6 values, they would notice an unpredictable behavior. We concentrate our investigation on $n$-circuits, which are series-parallel networks composed of exactly $n$ unit resistors. The graph readily suggests that the lower bound of the average resistance of all $n$-circuits, $M_n$, is $1$, a result that will be proven later in the paper.

\begin{figure}[H]
    \centering
    \includegraphics[scale=0.40]{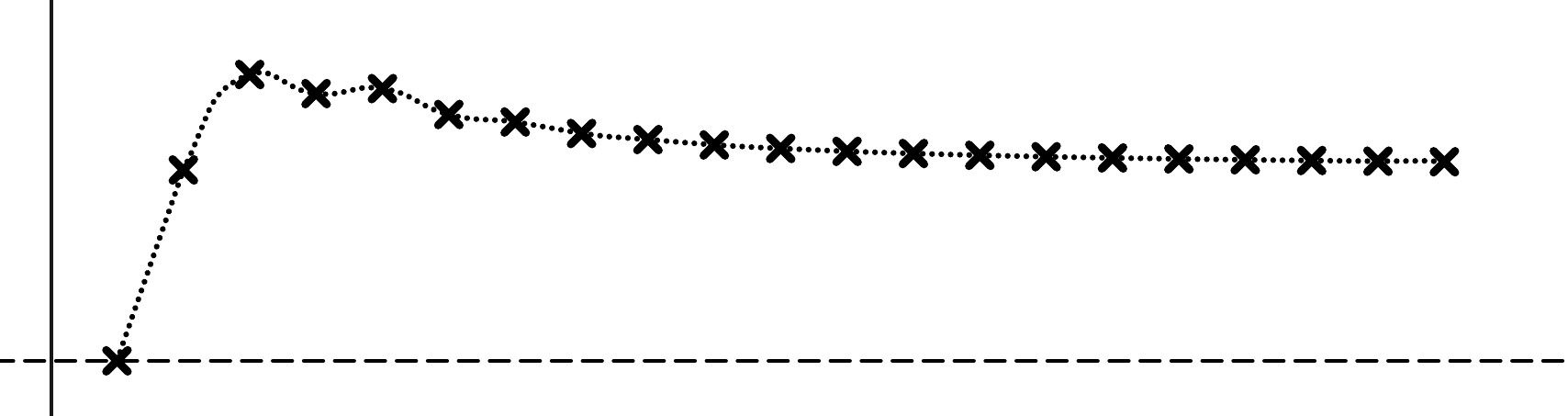}
    \caption{The first few values of $M_n$ connected by Bézier curves. The dashed line is $y = 1$.}
    \label{m}
\end{figure}

\newpage

\section*{Notation \& Definitions}

\begin{align*}
    \text{Series Circuit} \ \ \ \ &\coloneq \ \ \ \  \text{A series combination of smaller circuits} \\
    \text{Parallel Circuit} \ \ \ \ &\coloneq \ \ \ \  \text{A parallel combination of smaller circuits} \\
    \Gamma_n \ \ \ \ &\coloneq \ \ \ \  \text{The set of all }n\text{-circuits} \\
    Q_n \ \ \ \ &\coloneq \ \ \ \  \text{The number of }n\text{-circuits, }|\Gamma_n| \\
    R_n \ \ \ \ &\coloneq \ \ \ \  \text{The total resistance of all }n\text{-circuits} \\
    M_n \ \ \ \ &\coloneq \ \ \ \  \text{The average resistance of all }n\text{-circuits, }R_n/Q_n \\
    \Omega_n \ \ \ \ &\coloneq \ \ \ \  \text{The multiset of resistances of all }n\text{-circuits} \\ 
    \mathcal{O}_n \ \ \ \ &\coloneq \ \ \ \  \text{The }n\text{-omnicircuit} \text{, all }n\text{-circuits connected in series}  \\
    C_i(n) \ \ \ \ &\coloneq \ \ \ \  \text{The \# of appearances of any parallel }i\text{-circuit in }\mathcal{O}_n \\
    r(\gamma) \ \ \ \ &\coloneq \ \ \ \  \text{The resistance of the circuit }\gamma \\
    r_k(\gamma) \ \ \ \ &\coloneq \ \ \ \  \text{The } k\text{-resistance of the circuit }\gamma \\
    \gamma^* \ \ \ \ &\coloneq \ \ \ \  \text{The inverse of the circuit }\gamma \\
    \subC{\gamma} \ \ \ \ &\coloneq \ \ \ \  \text{The non-empty multiset in the OPoM notation of }\gamma \\ 
    \widetilde{\square{}} \ \ \ \ &\coloneq \ \ \ \  \square{}\text{ considering only series circuits} \\
    \overline{\overline{\square{}}} \ \ \ \ &\coloneq \ \ \ \  \square{}\text{ considering only parallel circuits} \\
    \mathcal{D}(\gamma) \ \ \ \ &\coloneq \ \ \ \  \text{The depth of the circuit } \gamma \\
    s^{\times{}} \ \ \ \ &\coloneq \ \ \ \  \text{The support set of the multiset }s \\
    v_s(i) \ \ \ \ &\coloneq \ \ \ \  \text{The multiplicity of }i\text{ in }s \\
    p \vdash n \ \ \ \ &\coloneq \ \ \ \  p\text{ is a partition of }n \\
    d(n) \ \ \ \ &\coloneq \ \ \ \  \text{The number of positive divisors of } n \\
\end{align*}
\newpage

\section{Quantifying Circuits}
    Let $Q_n$ denote the number of distinct $n$-circuits.
    To begin counting the $n$-circuits, one must come up with a way to uniquely represent them. We will be using Ordered Pairs of Multisets (OPoM) to do so. Let $(\{\}, \{\})$ represent the unit circuit, which is a single resistor and is considered to be both series and parallel. The series connection of a multiset of parallel circuits $\{\alpha_1, \alpha_2, \alpha_3, \cdots, \alpha_m \}$ is represented by $\alpha = (\{\alpha_1, \alpha_2, \alpha_3, \cdots, \alpha_m \}, \{\})$, and the parallel connection of a multiset of series circuits $\{\beta_1, \beta_2, \beta_3, \cdots, \beta_n \}$ is represented by $\beta = (\{\}, \{\beta_1, \beta_2, \beta_3, \cdots, \beta_n \})$. Not every ordered pair produces a circuit: a circuit  $\gamma = (\{m_1\}, \{m_2\})$ represents a series or a parallel circuit if and only if $m_1 = \{\}$ or $m_2 = \{\}$, respectively. This motivates the use of $ \subC{\gamma} $ to denote the non-empty multiset of sub-circuits of $\gamma$. The sub-circuits of any circuit are of its opposite connection type.
\begin{theorem}[Partition-based $n$-circuit Generation]
    The number of circuits generated by a partition $p$, $N(p)$ can be recursively counted by:
    \begin{equation*}
        N(p)=\prod_{i\in p^{\times}}\sum_{t\vdash v_p(i)} {{q_i}\choose{|t|}} |t|!\prod_{j\in t^{\times}} \frac{1}{v_t(j)!}
    \end{equation*}
    where
    \begin{equation*}
        Q_n=2q_n-\delta_{n1}=\sum_{p\vdash n} N(p)
    \end{equation*}
    counts the number of $n$-circuits $\forall n\in\mathbb{N}$.
\end{theorem}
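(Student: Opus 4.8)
The plan is to reduce the count of all $n$-circuits to a count of series $n$-circuits, and then to count those by the sizes of their parallel sub-circuits. First I would record the structural facts underlying the OPoM representation: every $n$-circuit is uniquely the unit circuit (only when $n=1$), a proper series circuit, or a proper parallel circuit, and the inversion $\gamma \mapsto \gamma^*$ is an involution swapping the series and parallel types while preserving $n$. Letting $q_n$ denote the number of series $n$-circuits, this involution is a bijection onto the parallel $n$-circuits, so both classes have size $q_n$; since the only circuit that is simultaneously series and parallel is the unit (which occurs precisely when $n=1$), inclusion--exclusion yields $Q_n = 2q_n - \delta_{n1}$. This disposes of the right-hand equality once I show $q_n = \sum_{p\vdash n,\ |p|\ge 2} N(p)$, and it reorganizes the whole problem into verifying the $N(p)$ formula.

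Next I would set up the decomposition. A series $n$-circuit is the series join of its multiset $\subC{\gamma}$ of parallel sub-circuits, and by canonicity this multiset determines $\gamma$ and conversely. Grouping the sub-circuits by their resistor-count produces a partition $p\vdash n$ recording the sizes, and each choice of $p$ together with, for every part-size $i\in p^{\times}$, a multiset of $v_p(i)$ parallel $i$-circuits reconstitutes a unique series circuit. Because choices for distinct sizes are independent, $N(p)$ is a product over $i\in p^{\times}$ of the number of multisets of size $v_p(i)$ drawn from the $q_i$ available parallel $i$-circuits, i.e. $N(p) = \prod_{i\in p^{\times}}\binom{q_i + v_p(i) - 1}{v_p(i)}$. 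I would then observe that the degenerate single-part partition $p=(n)$ gives $N((n)) = q_n$, which is exactly the count of parallel $n$-circuits; this supplies the ``second copy'' of $q_n$ matching the $2q_n-\delta_{n1}$ bookkeeping (with $n=1$ handled separately, where the lone partition contributes $q_1=1=2q_1-\delta_{11}$).

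The remaining task is to show the displayed inner sum equals this multiset coefficient, namely $\sum_{t\vdash m}\binom{q}{|t|}\,|t|!\,\prod_{j\in t^{\times}}\tfrac{1}{v_t(j)!} = \binom{q+m-1}{m}$ with $m = v_p(i)$ and $q = q_i$. The plan is a direct count of the multisets of size $m$ from $q$ types, stratified by the partition $t$ of $m$ formed by the list of multiplicities: such a multiset uses $|t|$ distinct types carrying the multiplicities recorded by $t$, so I choose an ordered tuple of $|t|$ distinct types in $q!/(q-|t|)! = \binom{q}{|t|}|t|!$ ways and quotient by $\prod_{j\in t^{\times}} v_t(j)!$ to identify tuples that merely permute slots of equal multiplicity. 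Summing over all $t\vdash m$ visits each multiset exactly once, which gives the identity.

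I expect the main obstacle to be the bookkeeping of the representation rather than any single calculation: one must argue rigorously that the OPoM/partition encoding is a genuine bijection — no series-of-series collapse, no repetition, and the ``opposite connection type'' rule correctly enforced — so that $N(p)$ neither over- nor under-counts, and that the degenerate single-part partition and the $n=1$ case are absorbed cleanly into $2q_n-\delta_{n1}$. Once canonicity is in hand, the product formula and the multiset identity are routine, and induction on $n$ (each part-size $i$ in a partition with $|p|\ge 2$ satisfies $i<n$, so $q_i$ is already known) makes the recursion well-founded.
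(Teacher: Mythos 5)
Correct, and essentially the paper's own argument: you decompose a series $n$-circuit by the partition $p$ recording the sizes of its parallel sub-circuits, and you count the choices for each part-size $i$ by stratifying the multisets of parallel $i$-circuits according to the partition $t$ of their multiplicities, each stratum having size $\binom{q_i}{|t|}\,|t|!\,\prod_{j\in t^{\times}}\frac{1}{v_t(j)!}$ \textemdash{} exactly the count in the paper's proof. Your extra packaging (summing the strata to the closed form $\binom{q_i+v_p(i)-1}{v_p(i)}$), together with your explicit treatment of the single-part partition $p=(n)$ as supplying the $q_n$ parallel circuits and of the $n=1$ case, simply makes rigorous the bookkeeping that the paper compresses into ``a similar argument holds for parallel $n$-circuits,'' rather than constituting a different method.
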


\begin{proof}
    In order to construct a series $n$-circuit $\gamma = (\{\gamma_1, \gamma_2, \gamma_3, \cdots, \gamma_k\}, \{\})$, one should first choose the number of resistors in each $\gamma_i$, namely an unordered $k$-tuple of numbers adding up to $n$. Thus, one should start by considering a partition $p$ of $n$. For any unique element $i$ in $p$, $v_p(i)$ parallel $i$-circuits have to be chosen. However, these $v_p(i)$ parallel $i$-circuits are not necessarily unique, therefore, all the different ways for those circuits to be equivalent should be considered. Mathematically, partitions of $v_p(i)$ represent these cases, as any element of $t \vdash v_p(i)$ can be thought of as the number of times that a certain $i$-circuit appears in $\gamma$. Now any permutation of $|t|$ parallel $i$-circuits from the possible $q_i$ options can be considered alongside $t$ to build the circuit; except some cases are still over-counted: if the elements of $t$ corresponding to some of these $|t|$ parallel $i$-circuits are the same, then swapping them in the permutation does not change $\gamma$. This over-counting can be accounted for by dividing by $v_t(j)!$ for every unique $j$ in $t$. A similar argument holds for parallel $n$-circuits.
\end{proof}

\begin{table}[H]
    \centering
    \begin{tabular}{c|cccccccccccc}
        $n$ & 1 & 2 & 3 & 4 & 5 & 6 & 7 & 8 & 9 & 10 & 11 & 12\\
        \hline
        $Q_n$ & 1 & 2 & 4 & 10 & 24 & 66 & 180 & 522 & 1532 & 4624 & 14136 & 43930\\
    \end{tabular}
    \caption{$Q_n$ for $n \leq 12$}
    \label{tab:Q}
\end{table}
We define $q_0=Q_0=1$.

\section{Inverses}
The inverse of a circuit is defined as the circuit obtained by switching the order of every pair in its OPoM notation. This reveals a bijection between the series and parallel $n$-circuits, which is a property that will be used extensively. 

\begin{theorem}[Circuit-Multiplicative Inversion]
\label{inverse_resistance}
For any circuit $\gamma$,
\begin{equation*}
    r(\gamma)r(\gamma^*) = 1
\end{equation*}

\end{theorem}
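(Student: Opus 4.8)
The plan is to prove the identity by structural induction on the number of resistors $n$ in $\gamma$, leaning on the recursive OPoM definition and the single crucial observation that inversion exchanges the two coordinates of every ordered pair, and hence swaps the series and parallel connection types at every level of the circuit simultaneously. First I would handle the base case: the unit circuit $\gamma = (\{\},\{\})$ has $r(\gamma)=1$, and since swapping its two empty multisets returns the same object, $\gamma^* = \gamma$, so $r(\gamma)r(\gamma^*) = 1\cdot 1 = 1$. This is the only $1$-circuit, so the base case is complete.

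For the inductive step I would assume the identity holds for every circuit with strictly fewer than $n$ resistors and split into the two possible connection types. If $\gamma$ is a series circuit, then by definition $\gamma = (\{\alpha_1,\dots,\alpha_m\},\{\})$ with each $\alpha_i$ a parallel sub-circuit, and its inverse is the parallel circuit $\gamma^* = (\{\},\{\alpha_1^*,\dots,\alpha_m^*\})$. Using the series rule (resistances add) and the parallel rule (reciprocals of resistances add), I would write
\[
r(\gamma) = \sum_{i=1}^{m} r(\alpha_i), \qquad r(\gamma^*) = \left( \sum_{i=1}^{m} \frac{1}{r(\alpha_i^*)} \right)^{-1}.
\]
Each $\alpha_i$ has fewer than $n$ resistors, so the inductive hypothesis gives $r(\alpha_i)\,r(\alpha_i^*) = 1$, i.e. $1/r(\alpha_i^*) = r(\alpha_i)$. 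Substituting yields
\[
r(\gamma^*) = \left( \sum_{i=1}^{m} r(\alpha_i) \right)^{-1} = \frac{1}{r(\gamma)},
\]
which is exactly the claim. The parallel case is entirely symmetric: one swaps the roles of the two resistance rules and the same substitution closes the argument, so it suffices to remark on the symmetry rather than repeat the computation.

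The step I expect to require the most care is not any calculation but the justification that the induction is well-founded and that the inverse behaves as stated. The sub-circuits $\alpha_i$ of a genuine $n$-circuit (with $n \ge 2$) each use at least one resistor and together use all $n$, so each uses strictly fewer than $n$, guaranteeing the hypothesis applies. I would also want to confirm that inverting $\gamma$ amounts precisely to inverting each sub-circuit and flipping the top-level connection type — this follows directly from the definition of $\gamma^*$ as coordinate-swapping applied throughout the OPoM expression, and it is what lets the inductive hypothesis feed cleanly into the series and parallel formulas. Once that structural fact is pinned down, the algebra is the elementary identity that a sum and a reciprocal-of-sum-of-reciprocals are exchanged under termwise reciprocation, completing the proof.
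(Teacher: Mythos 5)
Your proof is correct and takes essentially the same approach as the paper: a structural induction whose core step uses the inductive hypothesis to turn the reciprocal-sum-of-reciprocals for $r(\gamma^*)$ into $1/\sum_i r(\gamma_i) = 1/r(\gamma)$, with the parallel case by symmetry. The only difference is cosmetic — you induct on the number of resistors while the paper inducts on the nesting depth $\mathcal{D}(\gamma)$ — and both measures make the induction well-founded for exactly the reason you note.
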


\begin{proof}
    For any circuit $\gamma$, define its depth $\mathcal{D}(\gamma)$ to be the maximum nesting of its resistors. We will prove the theorem by strong induction on $\mathcal{D}(\gamma)$.
    \begin{itemize}
        \item \emph{Base Case}: $\mathcal{D}(\gamma) = 0 \Rightarrow \gamma$ is unit $\Rightarrow r(\gamma^*)=r(\gamma)=1 \Rightarrow r(\gamma^*)r(\gamma)=1$.

        \item \emph{Induction Hypothesis}: $r(\gamma)r(\gamma^*) = 1,\ \forall \mathcal{D}(\gamma) < n$.

        \item \emph{Induction Step}:\par Let $\mathcal{D}(\gamma) = n$. If $\gamma$ is a series circuit, let $ \subC{\gamma} = \{\gamma_1, \gamma_2, \gamma_3, \cdots, \gamma_k\}$, where $\mathcal{D}(\gamma_i) < \mathcal{D}(\gamma) = n,\ \forall i$. Then:
    \begin{equation*}
        \gamma^*=(\{\}, \{\gamma^*_1, \gamma^*_2, \gamma^*_3, \cdots, \gamma^*_k\}) \Rightarrow r(\gamma^*) =\frac{1}{\sum_{i=1}^k \frac{1}{r(\gamma_i^*)}}
    \end{equation*}
    By the induction hypothesis:
    \begin{equation*}
        r(\gamma_i^*) =\frac{1}{r(\gamma_i)},\ \forall i \Rightarrow r(\gamma^*) =\frac{1}{\sum_{i=1}^k r(\gamma_i)}=\frac{1}{r(\gamma)}.
    \end{equation*}
    The parallel case can be proven similarly.
    \end{itemize}
\end{proof}

\begin{corollary}[The Lower Bound]
\label{lowerbound}
    Let $M_n$ be the average resistance of all $n$-circuits. Then, $\forall n \in \mathbb{N}$:
    \begin{equation*}
        M_n \ge 1
    \end{equation*}
\end{corollary}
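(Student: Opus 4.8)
The plan is to exploit the multiplicative inversion identity $r(\gamma)r(\gamma^*)=1$ established in Theorem~\ref{inverse_resistance}, together with the fact that inversion is an involution on $\Gamma_n$. First I would observe that $\gamma\mapsto\gamma^*$ maps $\Gamma_n$ into itself: switching every ordered pair in the OPoM notation neither creates nor destroys resistors, so $\gamma^*$ is again an $n$-circuit, and applying the operation twice returns the original circuit. Hence $*$ is a bijection of $\Gamma_n$ onto itself, which gives $\sum_{\gamma\in\Gamma_n} r(\gamma^*)=\sum_{\gamma\in\Gamma_n} r(\gamma)=R_n$ by reindexing the sum.

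The second ingredient is the AM--GM inequality applied to each circuit and its inverse. Since $r(\gamma)>0$ and $r(\gamma^*)>0$ with $r(\gamma)r(\gamma^*)=1$, we have
\begin{equation*}
    r(\gamma)+r(\gamma^*)\ge 2\sqrt{r(\gamma)r(\gamma^*)}=2 .
\end{equation*}
Summing this over all $\gamma\in\Gamma_n$ and using the bijection to identify both sums with $R_n$ yields
\begin{equation*}
    2R_n=\sum_{\gamma\in\Gamma_n}\bigl(r(\gamma)+r(\gamma^*)\bigr)\ge 2Q_n ,
\end{equation*}
so $R_n\ge Q_n$ and therefore $M_n=R_n/Q_n\ge 1$, as claimed.

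The argument is short because the heavy lifting is done by the inversion theorem; the only point requiring care — the main (minor) obstacle — is justifying that $*$ is a genuine bijection of all of $\Gamma_n$ rather than merely the series-to-parallel correspondence stated earlier. I would nail this down directly on the OPoM representation by checking that $(\gamma^*)^*=\gamma$ and that the resistor count is preserved, which is exactly what legitimizes the reindexing $\sum_\gamma r(\gamma^*)=\sum_\gamma r(\gamma)$. As a closing remark, equality in AM--GM forces $r(\gamma)=r(\gamma^*)=1$ for every $\gamma$, which can occur only for $n=1$; this already suggests that the bound is in fact strict for $n\ge 2$.
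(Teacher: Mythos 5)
Your proof is correct and rests on exactly the same two ingredients as the paper's: the inversion identity $r(\gamma)r(\gamma^*)=1$ from Theorem~\ref{inverse_resistance} and the AM--GM inequality. The only (cosmetic) difference is that the paper applies AM--GM globally---showing the geometric mean of all resistances in $\Gamma_n$ equals $1$ by pairing each series circuit with its parallel inverse---whereas you apply the two-variable AM--GM to each pair $\bigl(r(\gamma), r(\gamma^*)\bigr)$ and sum over the involution; both reduce to the same pairing argument, and your closing observation that equality forces $n=1$ matches the paper's remark.
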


\begin{proof}
\begin{equation*}
    GM(\{r(\gamma):\gamma \in \Gamma_n\})=\left( \prod_{\gamma \in \widetilde{\Gamma}_n} r(\gamma)r(\gamma^*)\right)^\frac{1}{Q_n}=\left( \prod_{\gamma \in \widetilde{\Gamma}_n} 1 \right)^{\frac{1}{Q_n}}=1
\end{equation*}
Since the arithmetic mean is always greater than or equal to the geometric mean, this gives the lower bound $M_n\ge 1$ \textemdash{} equality only occurs at $n = 1$.
\end{proof}

\section{Properties of $C_i(n)$}
Define the $n$-omnicircuit to be the series connection of all $n$-circuits \textemdash{} see Figure \ref{omni4} \textemdash{} and $R_n$ to be the total resistance of all $n$-circuits. Mathematically:

\begin{equation*}
    \begin{split}
        \mathcal{O}_n &= \left(\left\{ \overline{\overline{\Gamma}}_n + \sum\limits_{\gamma \in \widetilde{\Gamma}_n} \subC{\gamma} \right\}, \{\}\right) \\
        R_n &= r(\mathcal{O}_n) = \sum_{\gamma \in \Gamma_n} r(\gamma)
    \end{split}
\end{equation*}

Omnicircuits prove to be the key to the investigation of $R_n$ as they motivate the introduction of $C_i(n)$. Due to symmetry, one can define $C_i(n) = v_{\subC{\mathcal{O}}_{n}}(\gamma)$, where $\gamma$ is any parallel $i$-circuit. In other words, $C_i(n)$ counts the number of appearances of any $i$-circuit in the $n$-omnicircuit. By definition, $C_n(n) = 1$, $ \forall n\in\mathbb{N}$, and $C_i(n)$ is not defined for $i > n$. This function is particularly useful as it allows for an alternative calculation of $R_n$:
\begin{equation}
\label{rsumrp}
    R_n = \sum_{i=1}^n \overline{\overline{R}}_iC_i(n)
\end{equation}

\begin{table}[H]
\centering
\begin{tabular}{c|cccccccccccc}
$C_{i}(n)$                    & 1 & 2 & 3 & 4 & 5  & 6  & 7   & 8   & 9   & 10   & 11   & 12     \\ 
\hline
1                           & 1 & 2 & 4 & 8 & 18 & 42 & 108 & 288 & 810 & 2342 & 6966 & 21102  \\
2                           &   & 1 & 1 & 3 & 5  & 13 & 29  & 79  & 209 & 601  & 1741 & 5225   \\
3                           &   &   & 1 & 1 & 2  & 5  & 11  & 26  & 71  & 191  & 548  & 1603   \\
4                           &   &   &   & 1 & 1  & 2  & 4   & 11  & 25  & 68   & 184  & 533    \\
5                           &   &   &   &   & 1  & 1  & 2   & 4   & 10  & 25   & 67   & 182    \\
6                           &   &   &   &   &    & 1  & 1   & 2   & 4   & 10   & 24   & 67     \\
7                           &   &   &   &   &    &    & 1   & 1   & 2   & 4    & 10   & 24     \\
8                           &   &   &   &   &    &    &     & 1   & 1   & 2    & 4    & 10     \\
9                           &   &   &   &   &    &    &     &     & 1   & 1    & 2    & 4      \\
10                          &   &   &   &   &    &    &     &     &     & 1    & 1    & 2      \\
11                          &   &   &   &   &    &    &     &     &     &      & 1    & 1      \\
12                          &   &   &   &   &    &    &     &     &     &      &      & 1     
\end{tabular}
    \caption{$C_{i}(n)$ for $n \le 12$}
    \label{tab:cni}
\end{table}

As demonstrated by Table \ref{tab:cni}, the sequence $\{C_i(n)\}_{i=1}^n$ seems to end with similar values for different $n$. This pattern is equivalent to $\{C_{n-i}(n)\}_{n=i+1}^\infty$ ``converging'', as apparent in Table \ref{tab:cnn-i}.

\begin{table}[H]
\centering
\begin{tabular}{l|lllllllllll||l}
$C_{n-i}(n)$ & 1 & 2 & 3 & 4 & 5  & 6  & 7   & 8   & 9   & 10   & 11   & $Q_i$  \\ 
\hline
0        & 1 & 1 & 1 & 1 & 1  & 1  & 1   & 1   & 1   & 1    & 1    & 1         \\
1        &   & 2 & 1 & 1 & 1  & 1  & 1   & 1   & 1   & 1    & 1    & 1         \\
2        &   &   & 4 & 3 & 2  & 2  & 2   & 2   & 2   & 2    & 2    & 2         \\
3        &   &   &   & 8 & 5  & 5  & 4   & 4   & 4   & 4    & 4    & 4         \\
4        &   &   &   &   & 18 & 13 & 11  & 11  & 10  & 10   & 10   & 10        \\
5        &   &   &   &   &    & 42 & 29  & 26  & 25  & 25   & 24   & 24        \\
6        &   &   &   &   &    &    & 108 & 79  & 71  & 68   & 67   & 66        \\
7        &   &   &   &   &    &    &     & 288 & 209 & 191  & 184  & 180       \\
8        &   &   &   &   &    &    &     &     & 810 & 601  & 548  & 522       \\
9       &   &   &   &   &    &    &     &     &     & 2342 & 1741 & 1532      \\
10       &   &   &   &   &    &    &     &     &     &      & 6966 & 4624    
\end{tabular}

    \caption{$C_{n-i}(n)$ for $i \le 10$ and $n \le 11$, alongside $Q_i$}
    \label{tab:cnn-i}
\end{table}
This motivates:

\begin{theorem}[Omnicircuit Meta-Counting Property]
    \label{omcp}
    $C_i(n)$ and $Q_n$ are related by the following expression:
    \begin{equation*}
        C_i(n)=C_i(n-i)+Q_{n-i}
    \end{equation*}
    where $i, n \in \mathbb{N}$ and $i \le n$.
\end{theorem}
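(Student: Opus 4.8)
The plan is to fix an arbitrary parallel $i$-circuit $\delta$ and read off $C_i(n) = v_{\subC{\mathcal{O}}_{n}}(\delta)$ directly from the defining multiset $\subC{\mathcal{O}}_n = \overline{\overline{\Gamma}}_n + \sum_{\gamma \in \widetilde{\Gamma}_n} \subC{\gamma}$. When $i < n$, the circuit $\delta$ is not a parallel $n$-circuit, so the $\overline{\overline{\Gamma}}_n$ term contributes nothing and
\begin{equation*}
    C_i(n) = \sum_{\gamma \in \widetilde{\Gamma}_n} v_{\subC{\gamma}}(\delta),
\end{equation*}
that is, $C_i(n)$ counts every appearance of $\delta$ as a component across all series $n$-circuits. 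First I would set up the circuit-level correspondence between series $n$-circuits $\gamma$ that actually contain $\delta$ and multisets $S$ of parallel circuits of total size $n-i$ with $|S| \ge 1$, via $\gamma \mapsto S = \subC{\gamma} \setminus \{\delta\}$ (deleting one copy), with inverse $S \mapsto \gamma$ determined by $\subC{\gamma} = S + \{\delta\}$. The reason for isolating $S$ is that each admissible $S$ is precisely the component-multiset of a unique $(n-i)$-circuit — a single parallel circuit when $|S| = 1$ and a genuine series circuit (at least two components) when $|S| \ge 2$ — so these $S$ are enumerated by $Q_{n-i}$.

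The main obstacle, and the one step that demands care, is that although $\gamma$ and $S$ are in bijection, \emph{occurrences} of $\delta$ are not: when $\delta$ reappears inside $S$, the circuit $\gamma = \gamma(S)$ carries $v_{\subC{\gamma}}(\delta) = v_S(\delta) + 1$ copies of $\delta$, all of which deletion collapses to the same $S$. Rather than engineer an occurrence-level bijection, I would sum the exact multiplicity. Reindexing the sum by $S$ and using $v_{\subC{\gamma}}(\delta) = v_S(\delta) + 1$ gives
\begin{equation*}
    C_i(n) = \sum_{S}\bigl(v_S(\delta) + 1\bigr) = \Bigl(\sum_{S} v_S(\delta)\Bigr) + Q_{n-i},
\end{equation*}
where the constant $1$ summed over the $Q_{n-i}$ admissible multisets accounts for the second term, leaving the residual $\sum_S v_S(\delta)$ to be interpreted.

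To close the argument I would identify $\sum_S v_S(\delta) = C_i(n-i)$ by splitting the sum over $(n-i)$-circuits into the two cases above: the $|S| = 1$ multisets range over the parallel $(n-i)$-circuits and contribute $v_{\overline{\overline{\Gamma}}_{n-i}}(\delta)$, while the $|S| \ge 2$ multisets range over the series $(n-i)$-circuits and contribute $\sum_{\rho \in \widetilde{\Gamma}_{n-i}} v_{\subC{\rho}}(\delta)$; together these are exactly $v_{\subC{\mathcal{O}}_{n-i}}(\delta) = C_i(n-i)$ by the definition of $\mathcal{O}_{n-i}$. This establishes $C_i(n) = C_i(n-i) + Q_{n-i}$ for $i < n$. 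The boundary case $i = n$ I would dispatch separately: a series $n$-circuit cannot have an $n$-resistor component, so the sum is empty and $C_n(n) = 1 = C_n(0) + Q_0$ under the conventions $Q_0 = 1$ and $C_i(m) = 0$ for $i > m$. I expect the only genuine subtlety to be the multiplicity bookkeeping, so I would record the identity $v_{\subC{\gamma}}(\delta) = v_S(\delta) + 1$ explicitly and confirm it against the small instance $C_1(2) = C_1(1) + Q_1 = 2$, where $\delta$ is the unit and the lone series $2$-circuit contributes the expected two occurrences, before writing the general proof.
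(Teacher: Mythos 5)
Your proposal is correct and takes essentially the same approach as the paper: the paper's (much terser) proof classifies each appearance of a parallel $i$-circuit in $\mathcal{O}_n$ as either lying inside the residual $(n-i)$-circuit (contributing $C_i(n-i)$) or being the copy attached to it (contributing $Q_{n-i}$, once per $(n-i)$-circuit), which is exactly your $\sum_S\bigl(v_S(\delta)+1\bigr)$ bookkeeping. You simply make explicit the delete-one-copy correspondence and the multiplicity identity that the paper leaves implicit.
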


\begin{proof}
Any parallel $i$-circuit appears in $\mathcal{O}_{n}$ either as a sub-circuit of or connected to an $(n-i)$-circuit. The first case, by definition, happens $C_i(n-i)$ times, and the second case happens $Q_{n-i}$ times, once for each $n$-circuit.
\end{proof}

\begin{corollary}The observed behavior of $C_i(n)$ can be expressed as:
    \label{omcp_n/2}
    \begin{equation*}
    \end{equation*}
\end{corollary}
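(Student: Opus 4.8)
The plan is to read this off directly from the Omnicircuit Meta-Counting Property (Theorem~\ref{omcp}). Guided by the subscript in the label and by the stabilization visible in Table~\ref{tab:cnn-i}, the identity I expect to establish is
\[
C_i(n) = Q_{n-i} \quad\text{whenever}\quad \tfrac{n}{2} < i \le n,
\]
or, after reindexing, $C_{n-i}(n) = Q_i$ for every $n > 2i$. First I would fix such $i$ and $n$ and write down the single instance of the recursion $C_i(n) = C_i(n-i) + Q_{n-i}$ supplied by Theorem~\ref{omcp}.

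The crux is to show that the recursive term vanishes in this regime. The hypothesis $i > n/2$ forces $n - i < i$. Since $C_i(m)$ counts the appearances of a fixed parallel $i$-circuit — an object built from exactly $i$ unit resistors — among the sub-circuits of $\mathcal{O}_m$, and every circuit contributing to $\mathcal{O}_{n-i}$ is assembled from at most $n-i$ resistors, no such appearance is possible when $n - i < i$. Hence $C_i(n-i) = 0$, and substituting into Theorem~\ref{omcp} immediately yields $C_i(n) = Q_{n-i}$.

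To tie this back to the ``convergence'' that motivates the corollary, I would reindex by taking the circuit size to be $n-i$: applying the identity just proved with $n-i$ in place of $i$ is legitimate exactly when $n - i > n/2$, i.e.\ $n > 2i$, and it gives $C_{n-i}(n) = Q_{n-(n-i)} = Q_i$. Thus, for each fixed $i$, the sequence $\{C_{n-i}(n)\}_n$ is not merely convergent but eventually constant, equal to $Q_i$ for all $n > 2i$ — precisely the pattern in Table~\ref{tab:cnn-i}. I would also flag that the inequality must be strict: at $n = 2i$ one has $n - i = i$, so $C_i(n-i) = C_i(i) = 1 \neq 0$, and the recursion instead gives $C_i(2i) = Q_i + 1$.

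Because everything collapses to a single application of Theorem~\ref{omcp}, there is no real analytic obstacle; the only point demanding care is the boundary bookkeeping — justifying the convention $C_i(m) = 0$ for $m < i$ from the combinatorial meaning of $C_i$, and keeping the inequality strict at $n = 2i$ so that the regime of validity is recorded correctly.
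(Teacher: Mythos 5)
Your proposal is correct and follows essentially the same route as the paper: a single application of Theorem~\ref{omcp} together with the observation that $i > \tfrac{n}{2} \Rightarrow i > n-i \Rightarrow C_i(n-i) = 0$, yielding $C_i(n) = Q_{n-i}$, which is exactly the identity the (empty) corollary statement was meant to record. Your added care — justifying the convention $C_i(m)=0$ for $m<i$ combinatorially (the paper only says $C_i(n)$ is ``not defined'' there) and checking the boundary case $C_i(2i) = Q_i + 1$ — goes slightly beyond the paper's two-line proof but does not change the argument.
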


\begin{proof} 
$i>\frac{n}{2} \Rightarrow i>n-i \Rightarrow C_i(n-i)=0$. By \ref{omcp}, $C_i(n) = C_i(n-i) + Q_{n-i} = Q_{n-i}$.
\end{proof}

\begin{corollary}
    \label{csumq}
    $C_i(n)$ can be fully expressed in terms of $Q$ by:
    \begin{equation*}
        C_i(n) = \sum_{k=1}^{\lfloor \frac{n}{i} \rfloor} Q_{n-ki}
    \end{equation*}
    with special case:
    \begin{equation*}
        C_i(ni) = \sum_{k=0}^{n-1} Q_{ki}
    \end{equation*}
\end{corollary}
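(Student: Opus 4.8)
The plan is to prove the closed form by repeatedly unrolling the recurrence of Theorem \ref{omcp}, treating it as a telescoping sum, and then to obtain the special case by a simple substitution and reindexing. First I would fix $i$ and write $q = \lfloor n/i \rfloor$, so that $n = qi + r$ with $0 \le r < i$. The recurrence $C_i(m) = C_i(m-i) + Q_{m-i}$ is valid whenever $i \le m$, so I would apply it successively to the arguments $m = n, n-i, n-2i, \ldots, n-(q-1)i$, each of which is at least $i$ since $n - (q-1)i \ge i$.

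After $k$ such applications this yields
\begin{equation*}
    C_i(n) = \sum_{j=1}^{k} Q_{n-ji} + C_i(n-ki),
\end{equation*}
which I would verify by a short induction on $k$: the base case $k=0$ is trivial, and the inductive step replaces the trailing $C_i(n-ki)$ using Theorem \ref{omcp}, producing the extra summand $Q_{n-(k+1)i}$ and the new remainder $C_i(n-(k+1)i)$.

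The termination comes from the observation that after $q$ steps the leftover term is $C_i(n-qi) = C_i(r)$ with $r < i$; since an $i$-circuit cannot appear inside a smaller circuit, this term vanishes. Equivalently, this is exactly the regime handled by Corollary \ref{omcp_n/2}, whose conclusion $C_i(m) = Q_{m-i}$ for $m < 2i$ matches the final application of the recurrence and confirms no stray term survives. Setting $k = q$ then collapses the telescope to
\begin{equation*}
    C_i(n) = \sum_{j=1}^{\lfloor n/i \rfloor} Q_{n-ji}.
\end{equation*}

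Finally, for the special case I would substitute $n \mapsto ni$ into the general formula, so that $\lfloor ni/i \rfloor = n$ and
\begin{equation*}
    C_i(ni) = \sum_{k=1}^{n} Q_{ni-ki} = \sum_{k=1}^{n} Q_{(n-k)i};
\end{equation*}
reindexing via $k \mapsto n-k$ and recalling the convention $Q_0 = 1$ gives $C_i(ni) = \sum_{k=0}^{n-1} Q_{ki}$. The only point requiring genuine care is the termination of the unrolling — ensuring the leftover term is truly zero (or exactly absorbed by the base case of Corollary \ref{omcp_n/2}) rather than leaving an off-by-one stray $Q$ term — which is why I would pin down the range of $k$ through $q = \lfloor n/i \rfloor$ explicitly before collapsing the sum.
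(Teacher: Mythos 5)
Your proposal is correct and takes essentially the same route as the paper, whose entire proof is ``recursively apply Theorem \ref{omcp} to get the general form; replace $n$ with $ni$ and re-index to get the special case'' --- your telescoping induction simply makes that unrolling explicit. Your care about termination is also consistent with the paper's own convention, since the proof of Corollary \ref{omcp_n/2} likewise treats $C_i(m)=0$ whenever $m<i$, so the leftover term $C_i(r)$ vanishes exactly as you argue.
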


\begin{proof}
Recursively apply \ref{omcp} to get the general form. Replace $n$ with $ni$ and re-index to get the special case.
\end{proof}

\begin{remark}
\label{C1}
    Setting $i = 1$, $C_1(n)=\sum\limits_{i=0}^{n-1} Q_i$, which is the cumulative sum of $Q_n$.
\end{remark}

\begin{lemma}
    \label{nqn}
    By double-counting the number of resistors in $\mathcal{O}_n$, 
    \begin{equation*}
        nQ_n = 2C_1(n) -\delta_{1n} + \sum_{i=2}^{n-1} iQ_iC_i(n),\ \forall n \in \mathbb{N}
    \end{equation*}
\end{lemma}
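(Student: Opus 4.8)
The plan is to count the total number of unit resistors appearing in $\mathcal{O}_n$ in two different ways and equate the results. One count is immediate: $\mathcal{O}_n$ is the series connection of all $Q_n$ of the $n$-circuits, and each $n$-circuit is built from exactly $n$ resistors, so the resistor count is $nQ_n$. This will be the left-hand side of the claimed identity.

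For the second count I would exploit the top-level decomposition recorded in $\subC{\mathcal{O}}_n = \overline{\overline{\Gamma}}_n + \sum_{\gamma\in\widetilde{\Gamma}_n}\subC{\gamma}$. The key structural observation is that the series connection defining $\mathcal{O}_n$ merges each series $n$-circuit into its parallel components while leaving each parallel $n$-circuit intact as a single component; hence every resistor of every $n$-circuit sits inside exactly one parallel sub-circuit listed in $\subC{\mathcal{O}}_n$, and the total resistor count equals $\sum_{\delta\in\subC{\mathcal{O}}_n}|\delta|$. Grouping these sub-circuits by size $i$, a parallel $i$-circuit carries $i$ resistors, there are $q_i=(Q_i+\delta_{i1})/2$ distinct parallel $i$-circuits (using the series/parallel inversion bijection of Section~2 together with $Q_i = 2q_i-\delta_{i1}$), and each occurs with the common multiplicity $C_i(n)$ by the symmetry built into the definition of $C_i$. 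This yields the intermediate identity $nQ_n = \sum_{i=1}^n i\,q_i\,C_i(n)$.

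It then remains to rewrite this in terms of $Q$ alone. Substituting $q_i=(Q_i+\delta_{i1})/2$ turns the sum into $\tfrac12\sum_{i=1}^n iQ_iC_i(n) + \tfrac12 C_1(n)$, where the Kronecker term contributes only at $i=1$; clearing the factor of two gives the clean identity $2nQ_n = C_1(n) + \sum_{i=1}^n iQ_iC_i(n)$. Isolating the $i=n$ term, for which $C_n(n)=1$, reduces this to $nQ_n = C_1(n) + \sum_{i=1}^{n-1} iQ_iC_i(n)$. Peeling off the $i=1$ term $Q_1C_1(n)=C_1(n)$ finally produces $2C_1(n)+\sum_{i=2}^{n-1}iQ_iC_i(n)$, as desired.

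The main obstacle—really the only delicate point—is the bookkeeping at $n=1$. For $n\ge 2$ the indices $i=1$ and $i=n$ are distinct, so the two peeling steps above are independent and clean; but when $n=1$ the single term $i=1=n$ cannot be split off twice, the range $\sum_{i=1}^{n-1}$ is empty, and the extra copy of $C_1(n)$ never materializes, leaving $nQ_n=C_1(n)$. Writing the coefficient as $2C_1(n)-\delta_{1n}$ reconciles both cases, and a direct check ($1\cdot Q_1 = 1 = 2C_1(1)-1$) confirms the correction term. I would therefore carry out the $n\ge2$ computation as the main line and dispatch $n=1$ as a one-line base case.
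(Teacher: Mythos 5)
Your proof is correct and follows essentially the same approach as the paper: double-counting the resistors in $\mathcal{O}_n$ to get $nQ_n=\sum_{i=1}^n i\,q_i\,C_i(n)$, then substituting $2q_i=Q_i+\delta_{1i}$ and peeling off the $i=1$ and $i=n$ terms. If anything, your bookkeeping is tidier than the paper's own chain of equalities, which silently moves the $i=n$ term $\tfrac{1}{2}nQ_n$ across the equation (and drops a factor of $\tfrac{1}{2}$ on $C_1(n)$ along the way), and your explicit treatment of the $n=1$ case is a welcome addition.
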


\begin{proof}
    There are $q_i$ parallel $i$-circuits, each containing $i$ resistors and appearing $C_i(n)$ times in $\mathcal{O}_n$. On the other hand, there are $Q_n$ circuits in $\mathcal{O}_n$, each containing $n$ resistors. Thus, $nQ_n = \sum\limits_{i=1}^n iq_iC_i(n) = \frac{1}{2}\sum\limits_{i=1}^n (2q_i)iC_i(n) = \frac{1}{2}\sum\limits_{i=1}^n{i(Q_i + \delta_{1i})C_i(n)} = \frac{1}{2}\sum\limits_{i=1}^n{iQ_iC_i(n)} + C_1(n) = 2C_1(n) -\delta_{1n} + \sum\limits_{i=2}^{n-1}{iQ_iC_i(n)}$.
\end{proof}

\section{The Generating Function of $Q_n$}
The fundamental difference in the nature of the relationships found between $C_i(n)$ and $Q_n$ in the previous section allows for a non-trivial recursive expression of $Q_n$. By \ref{csumq} and \ref{nqn}:
\begin{equation*}
    nQ_n = 2C_1(n) -\delta_{1n} + \sum_{i=2}^{n-1} iQ_iC_i(n) = -\delta_{1n} + 2\sum_{i=0}^{n-1} Q_i + \sum_{i=2}^{n-1} iQ_i\sum_{j=1}^{\lfloor \frac{n}{i} \rfloor} Q_{n-ij} 
\end{equation*}
\begin{equation}
    \label{qrecursive}
    \Rightarrow Q_n = \frac{1}{n} \left( -\delta_{1n}+2\sum_{i=0}^{n-1} Q_i + \sum_{i=2}^{n-1} {iQ_i\sum_{j=1}^{\lfloor \frac{n}{i} \rfloor} Q_{n-ij}} \right)
\end{equation}
This allows for $O(n^2)$ computation of $Q_n$ and $O(n^3)$ computation of all $Q_i$ up to $i=n$, improving the complexity from exponential time \textemdash{} using brute force or $N(p)$ \textemdash{} to polynomial.

\begin{theorem}
\label{qgf}
    Let the generating function of $Q_n$, $\mathcal{Q}(x)$ be defined as:
    \begin{equation*}
        \mathcal{Q}(x) = \sum_{n=0}^\infty Q_nx^n
    \end{equation*}
    Then, $\mathcal{Q}(x)$ satisfies:
    \begin{equation}
    \label{firstQ}
        \mathcal{Q}(x) = \exp{\left( \sum\limits_{n=1}^\infty{\frac{\mathcal{Q}(x^n)+x^n-1}{2n}}\right) }
    \end{equation}
    and equivalently,
    \begin{equation}
    \label{secondQ}
        \mathcal{Q}(x) = \prod\limits_{n=1}^\infty{(1-x^n)^{-q_n}}
    \end{equation}
\end{theorem}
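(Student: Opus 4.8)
The plan is to prove the product form \eqref{secondQ} first, by realizing all circuits through the multiset (Euler-transform) construction applied to parallel circuits, and then to obtain \eqref{firstQ} by taking a logarithm. Write $\overline{\overline{\mathcal{Q}}}(x) = \sum_{n=1}^\infty q_n x^n$ for the generating function counting parallel circuits, recalling that $q_n$ is exactly the number of parallel $n$-circuits (equivalently, by the inverse bijection of Section 2, the number of series $n$-circuits when $n \ge 2$), with $q_1 = 1$. Since $Q_n = 2q_n - \delta_{n1}$ and $Q_0 = 1$, summing against $x^n$ gives the bridge identity
\begin{equation*}
    \mathcal{Q}(x) = 1 - x + 2\,\overline{\overline{\mathcal{Q}}}(x), \qquad \text{equivalently} \qquad \overline{\overline{\mathcal{Q}}}(x) = \tfrac{1}{2}\bigl(\mathcal{Q}(x) + x - 1\bigr).
\end{equation*}
This is the sole source of the factor $\tfrac{1}{2n}$ and the summand $x^n - 1$ appearing in \eqref{firstQ}.

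First I would set up the multiset construction. By the uniqueness of OPoM notation, a proper series circuit is in bijection with a multiset of at least two parallel circuits (the unit being admitted as a parallel circuit). Treating each parallel $i$-circuit as an atom of weight $i$, of which there are $q_i$, the standard multiset generating function for arbitrary multisets of these atoms is $\prod_{i=1}^\infty (1-x^i)^{-q_i}$. I would then decompose this product by multiset size: the empty multiset contributes $1$; singletons contribute $\overline{\overline{\mathcal{Q}}}(x)$; and multisets of size $\ge 2$ are precisely the proper series circuits, whose generating function is $\overline{\overline{\mathcal{Q}}}(x) - x$ because there are $q_n$ series $n$-circuits for $n \ge 2$ and none for $n = 1$. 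Summing the three contributions,
\begin{equation*}
    \prod_{i=1}^\infty (1-x^i)^{-q_i} = 1 + \overline{\overline{\mathcal{Q}}}(x) + \bigl(\overline{\overline{\mathcal{Q}}}(x) - x\bigr) = 1 - x + 2\,\overline{\overline{\mathcal{Q}}}(x) = \mathcal{Q}(x),
\end{equation*}
which is exactly \eqref{secondQ}.

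To pass to \eqref{firstQ}, I would take the logarithm of \eqref{secondQ} and expand using $-\log(1-x^i) = \sum_{k\ge 1} x^{ik}/k$, then interchange the order of summation (legitimate formally, since each monomial $x^m$ receives contributions from only finitely many pairs $(i,k)$):
\begin{equation*}
    \log \mathcal{Q}(x) = \sum_{i=1}^\infty q_i \sum_{k=1}^\infty \frac{x^{ik}}{k} = \sum_{k=1}^\infty \frac{1}{k} \sum_{i=1}^\infty q_i (x^k)^i = \sum_{k=1}^\infty \frac{\overline{\overline{\mathcal{Q}}}(x^k)}{k}.
\end{equation*}
Substituting the bridge identity $\overline{\overline{\mathcal{Q}}}(x^k) = \tfrac{1}{2}(\mathcal{Q}(x^k) + x^k - 1)$ and exponentiating produces \eqref{firstQ}.

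I expect the main obstacle to be the careful bookkeeping around the unit circuit and the ``at least two components'' convention for proper series circuits: the unit is simultaneously series and parallel, so it must be admitted as an atom in the multiset construction, yet it must not be recounted when the singletons are split off. Getting the two correction terms to cancel correctly — the $-x$ distinguishing singletons from size-$\ge 2$ multisets, and the $\delta_{n1}$ concealed in $Q_n = 2q_n - \delta_{n1}$ — is the delicate point; once the bridge identity is in hand, the remainder is a routine application of the Euler transform and the logarithmic expansion.
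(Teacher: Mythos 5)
Your proposal is correct, but it runs in the opposite direction from the paper's proof. The paper never argues combinatorially at this point: it takes the recursion (\ref{qrecursive}) — itself obtained from the $C_i(n)$ identities in \ref{csumq} and \ref{nqn} — feeds it into $\sum nQ_nx^n$, derives a differential equation for $\mathcal{Q}(x)$, and integrates to obtain (\ref{firstQ}); only afterwards does it deduce (\ref{secondQ}) from (\ref{firstQ}) by the same logarithm/geometric-series expansion you use, read in reverse. You instead prove (\ref{secondQ}) first, as a pure Euler-transform statement: with $\overline{\overline{\mathcal{Q}}}(x) = \sum_{n\ge 1} q_n x^n$, the multiset generating function $\prod_{i\ge 1}(1-x^i)^{-q_i}$ stratifies by multiset size into $1 + \overline{\overline{\mathcal{Q}}}(x) + \bigl(\overline{\overline{\mathcal{Q}}}(x) - x\bigr)$, which the bridge identity $\mathcal{Q}(x) = 1 - x + 2\overline{\overline{\mathcal{Q}}}(x)$ identifies with $\mathcal{Q}(x)$; then (\ref{firstQ}) follows by taking logarithms. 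Your bookkeeping around the unit circuit is exactly right: the only series circuit that is also parallel is the unit, so the size-$\ge 2$ stratum (proper series circuits) has generating function $\overline{\overline{\mathcal{Q}}}(x) - x$, the empty multiset accounts for $Q_0 = 1$, and the strata reproduce $Q_n = 2q_n - \delta_{n1}$; a quick check of $1 + x + 2x^2 + 4x^3 + 10x^4$ against Table \ref{tab:Q} confirms the normalization. The trade-off: your route is shorter and self-contained, needing only OPoM uniqueness and the series--parallel inversion bijection rather than the $C_i(n)$ machinery, and it makes the combinatorial content of (\ref{secondQ}) transparent; the paper's analytic route is heavier but certifies consistency between the functional equation and the recursion (\ref{qrecursive}) that the paper needs anyway for polynomial-time computation of $Q_n$. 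Indeed, as the paper remarks, Golinelli's original treatment proceeds in your direction — establishing the product form and then extracting the recursion from it — so your proof is arguably the more canonical one.
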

\begin{proof}
    This proof is quite technical and can be skipped; nonetheless, it is included for the sake of completeness.
    
    Applying (\ref{qrecursive}) inside the generating function gives:
    \begin{equation*}
        \begin{split}
            \sum_{n=0}^\infty {nQ_nx^n} =& \sum_{n=0}^\infty{\left( -\delta_{1n} + 2\sum_{i=0}^{n-1} Q_i + \sum_{i=2}^{n-1} {iQ_i\sum_{j=1}^{\lfloor \frac{n}{i} \rfloor} Q_{n-ij}} \right)x^n } \\
            \Rightarrow x\sum_{n=0}^\infty {Q_n\left(nx^{n-1}\right)} =& -x + 2\sum_{n=0}^\infty{\sum_{i=0}^{n-1}{Q_ix^n}} + \sum_{n=0}^\infty{\sum_{i=2}^{n-1}{\sum_{j=1}^{\lfloor \frac{n}{i} \rfloor}{iQ_iQ_{n-ij}x^n}}}
        \end{split}
    \end{equation*}
    
    By re-indexing and introducing $\mathcal{Q}'(x)$, one can get the following differential equation:
    \begin{equation*}
        \begin{split}
            x\mathcal{Q}'(x) =& -x + 2\sum_{i=0}^\infty{\sum_{n=i+1}^\infty{Q_ix^n}} + \sum_{i=2}^\infty{\sum_{n=i+1}^\infty{\sum_{j=1}^{\lfloor \frac{n}{i} \rfloor}{iQ_iQ_{n-ij}x^n}}} \\
            =& 2\sum_{i=0}^\infty{Q_i \frac{x^{i+1}}{1-x}} + \sum_{i=2}^\infty{\sum_{j=1}^\infty{\sum_{n=ij}^\infty{iQ_iQ_{n-ij}x^n}}} - \left( x + \sum_{i=2}^\infty{iQ_iQ_0x^i} \right)\\
            =& \frac{2x\mathcal{Q}(x)}{1-x} + \sum_{i=2}^\infty{iQ_i\sum_{j=1}^\infty{\sum_{n=0}^\infty{Q_{n}x^{n+ij}}}} - \sum_{i=0}^\infty{iQ_ix^i} \\
            \Rightarrow 2x\mathcal{Q}'(x) =& \frac{2x\mathcal{Q}(x)}{1-x} + \mathcal{Q}(x)\sum_{i=2}^\infty{iQ_i\sum_{j=1}^\infty{x^{ij}}}
        \end{split}
    \end{equation*}
    
    Trying to simplify $\sum_{i=2}^\infty{iQ_i\sum_{j=1}^\infty{x^{ij}}}$ using geometric series gives the Lambert Series of $Q_n$. Instead, one can introduce $\mathcal{Q}(x^j)$:
    \begin{equation*}
        \begin{split}
            2\frac{\mathcal{Q}'(x)}{\mathcal{Q}(x)} =& \frac{2}{1-x} + \frac{1}{x}\sum_{j=1}^\infty{\sum_{i=2}^\infty{iQ_i\left(x^j\right)^i}} \\
            =&  \frac{1}{1-x} + \sum_{j=1}^\infty{\sum_{i=1}^\infty{x^{j-1}iQ_i\left(x^j\right)^{i-1}}} \\
            =&  \frac{1}{1-x} + \sum_{j=1}^\infty{\frac{1}{j}\frac{d}{dx}\mathcal{Q}(x^j)}
        \end{split}
    \end{equation*}
    
    Then, integrating and using $\ln(1-x) = -\sum_{n=1}^\infty {\frac{x^n}{n}}$:
    \begin{equation*}
        \int_0^x{\frac{\mathcal{Q}'(t)}{\mathcal{Q}(t)}\ dt} = \frac{1}{2}\int_0^x{\frac{1}{1-t} + \sum_{j=1}^\infty{\frac{1}{j}\frac{d}{dt}\mathcal{Q}(t^j)} \  dt}
    \end{equation*}
    \begin{equation*}
        \begin{split}
            \Rightarrow \ln\mathcal{Q}(x) =& \frac{1}{2}\left( -\ln(1-x) + \sum_{j=1}^\infty{\frac{1}{j}\left( \mathcal{Q}(x^j) - \mathcal{Q}(0) \right)} \right) \\
            =& \sum\limits_{j=1}^\infty{\frac{\mathcal{Q}(x^j)+x^j-1}{2j}} \\
            \Rightarrow \mathcal{Q}(x) =& \exp{\left( \sum\limits_{j=1}^\infty{\frac{\mathcal{Q}(x^j)+x^j-1}{2j}} \right)}
        \end{split}
    \end{equation*}
    
    Finally, the equivalence between (\ref{firstQ}) and (\ref{secondQ}) can be shown as follows:
    \begin{equation*}
    \begin{split}
        \ln\mathcal{Q}(x) =& \sum\limits_{j=1}^\infty{\frac{\mathcal{Q}(x^j)+x^j-1}{2j}} \\
        =& \sum\limits_{j=1}^\infty{\frac{1}{2j}\left( 1 + x^j + \sum_{k=2}^\infty{Q_kx^{jk}} + x^j - 1 \right)} \\
        =& \sum\limits_{j=1}^\infty{\left( \frac{x^j}{j} + \frac{1}{2j}\sum_{k=2}^\infty{Q_kx^{jk}}\right)} \\
        \Leftrightarrow \ln\mathcal{Q}(x) + \ln(1-x) =& \frac{1} {2}\sum\limits_{j=1}^\infty{\sum_{k=2}^\infty{Q_k\frac{x^{jk}}{j}}} = \frac{1}{2}\sum\limits_{k=2}^\infty{Q_k\sum_{j=1}^\infty{\frac{x^{jk}}{j}}} \\
        =& \frac{-1}{2}\sum\limits_{k=2}^\infty{Q_k\ln(1-x^k)} \\
        \Leftrightarrow (1-x)\mathcal{Q}(x) =& \prod\limits_{k=2}^\infty{(1-x^k)^{-\frac{Q_k}{2}}} \Leftrightarrow \mathcal{Q}(x) = \prod\limits_{k=1}^\infty{(1-x^k)^{-q_k}}
    \end{split}
    \end{equation*}
\end{proof}

The two relations established in \ref{qgf} \textemdash{} (\ref{firstQ}) \& (\ref{secondQ}) \textemdash{} were derived by \cite{golinelli1997asymptoticbehaviortwoterminalseriesparallel} who used them to derive (\ref{qrecursive}) as well. The paper also derives the asymptotic behavior of $Q_n$, which proves to be crucial for our investigation; Golinelli achieved this by analyzing $\mathcal{Q}(x)$, concluding:
\begin{equation}
\label{asympq}
    Q_n \sim cd^{n}n^{-\frac{3}{2}}
\end{equation}
where $d = 3.5608393095389433 \pm 10^{-16}$ is the reciprocal of a root of $\mathcal{Q}(x) = 2$.

This result can also be derived using numerical extrapolation on (\ref{qrecursive}), but a lot of data is required \textemdash{} analyzing the first $2500$ terms gives a result of $d \approx 3.559$.
\newpage
\begin{theorem}
\label{c/q}
    The asymptotic behavior of $C_i(n)$ relative to $Q_n$ is:
    \begin{equation*}
        \lim_{n \rightarrow \infty} \frac{C_i(n)}{Q_n} = \frac{1}{d^i-1}, \text{\space\space\space\space\space} \forall i \in \mathbb{N} 
    \end{equation*}
\end{theorem}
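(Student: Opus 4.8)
The plan is to combine the exact formula from Corollary~\ref{csumq}, namely $C_i(n)=\sum_{k=1}^{\lfloor n/i\rfloor}Q_{n-ki}$, with the asymptotic $Q_n\sim cd^n n^{-3/2}$ from (\ref{asympq}), thereby reducing the claim to the summation of a geometric series. Dividing the formula by $Q_n$ gives $\frac{C_i(n)}{Q_n}=\sum_{k=1}^{\lfloor n/i\rfloor}\frac{Q_{n-ki}}{Q_n}$. First I would observe that for each \emph{fixed} $k$ the asymptotic yields $\frac{Q_{n-ki}}{Q_n}=d^{-ki}\bigl(1-\tfrac{ki}{n}\bigr)^{-3/2}(1+o(1))\to d^{-ki}$ as $n\to\infty$. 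Since $\sum_{k=1}^\infty d^{-ki}=\frac{d^{-i}}{1-d^{-i}}=\frac{1}{d^i-1}$, the target value is exactly the termwise limit summed up, so the entire content of the theorem is the justification of interchanging the limit with a sum whose number of terms grows with $n$.

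To set up this interchange, I would first record the two-sided bound implied by (\ref{asympq}): there exist constants $0<A\le B$ with $Ad^m m^{-3/2}\le Q_m\le Bd^m m^{-3/2}$ for all $m\ge 1$. I would then split the sum at $ki=\tfrac{n}{2}$ into a \emph{head} ($ki\le \tfrac{n}{2}$) and a \emph{tail} ($ki>\tfrac n2$). On the head the correction factor is tame: for $ki\le\tfrac n2$ one has $\frac{Q_{n-ki}}{Q_n}\le \frac{B}{A}d^{-ki}\bigl(\tfrac{n}{n-ki}\bigr)^{3/2}\le \tfrac{B}{A}2^{3/2}d^{-ki}=:b_k$, and $\sum_k b_k<\infty$. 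Because each head term tends to $d^{-ki}$ while being dominated by the summable $b_k$, Tannery's theorem (dominated convergence for series) lets the growing-length head sum converge to $\sum_{k=1}^\infty d^{-ki}=\frac{1}{d^i-1}$.

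The hard part will be the tail, where $n-ki$ is small and the polynomial factor $(n-ki)^{-3/2}$ is no longer bounded, so the head bound breaks down and a separate estimate is needed. Here I would not track individual terms but bound the whole tail at once: $\sum_{ki>n/2}Q_{n-ki}\le\sum_{m=0}^{\lfloor n/2\rfloor}Q_m$, and since $Q_m\le Bd^m m^{-3/2}\le Bd^m$ this partial sum is dominated by a convergent geometric series in $d$, giving $\sum_{m=0}^{\lfloor n/2\rfloor}Q_m=O(d^{\,n/2})$. Dividing by $Q_n\ge Ad^n n^{-3/2}$ then yields a tail contribution of order $O(n^{3/2}d^{-n/2})$, which vanishes because $d>1$. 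Combining the head limit $\frac{1}{d^i-1}$ with the vanishing tail gives $\lim_{n\to\infty}\frac{C_i(n)}{Q_n}=\frac{1}{d^i-1}$, as claimed.
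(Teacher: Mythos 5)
Your proposal is correct, and its skeleton is the same as the paper's: combine Corollary \ref{csumq} with the asymptotic (\ref{asympq}) and identify the limit as the geometric series $\sum_{k \ge 1} d^{-ik} = \frac{1}{d^i-1}$. Where you genuinely diverge is in the treatment of the limit--sum interchange, which is the entire technical content of the statement. The paper substitutes $Q_{n-ij} \approx c\,d^{n-ij}(n-ij)^{-3/2}$ into \emph{every} term of the sum \textemdash{} including terms where $n-ij$ is small, where the asymptotic gives no information, and even the term $j = n/i$ (when $i \mid n$), where $(n-ij)^{-3/2}$ is undefined since $n-ij=0$ \textemdash{} and then dismisses the correction factors with an informal ``it is safe to assume.'' Your version repairs exactly these points: the two-sided bound $A d^m m^{-3/2} \le Q_m \le B d^m m^{-3/2}$ (valid for all $m \ge 1$ after adjusting constants on the finitely many small $m$, since each $Q_m$ is positive and finite) is applied only on the head $ki \le n/2$, where the polynomial factor $\left(\frac{n}{n-ki}\right)^{3/2}$ is uniformly at most $2^{3/2}$, so Tannery's theorem legitimately yields the head limit; the tail, where the asymptotic form degenerates, is instead bounded wholesale by $\sum_{m \le n/2} Q_m = O(d^{n/2})$, which after dividing by $Q_n \ge A d^n n^{-3/2}$ contributes $O(n^{3/2} d^{-n/2}) \to 0$. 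What the paper's argument buys is brevity; what yours buys is an actual proof \textemdash{} the split at $n/2$ plus the crude geometric bound on the tail is precisely the justification the paper waves through, and nothing in your argument requires more than the stated asymptotic and positivity of the $Q_m$.
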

\begin{proof}
    Using \ref{csumq} and (\ref{asympq}):
    \begin{equation*}
        \begin{split}
            \lim_{n \rightarrow \infty} \frac{C_i(n)}{Q_n} &= \lim_{n \rightarrow \infty} \frac{\sum_{j=1}^{\lfloor \frac{n}{i} \rfloor}{Q_{n-ij}}}{Q_n} \\
            &= \lim_{n \rightarrow \infty} \frac{\sum_{j=1}^{\lfloor \frac{n}{i} \rfloor}{cd^{n-ij}(n-ij)^{-\frac{3}{2}}}}{cd^nn^{-\frac{3}{2}}} \\
            &= \lim_{n \rightarrow \infty} \sum_{j=1}^{\lfloor \frac{n}{i} \rfloor}{d^{-ij}\left(\frac{n}{n-ij} \right)^{\frac{3}{2}}} \\
        \end{split}
    \end{equation*}
    Considering a constant $j$, as $n \rightarrow \infty$, $d^{-ij}$ does not change and $\left(\frac{n}{n-ij}\right)^{\frac{3}{2}}$ approaches $1$. The terms that are added due to $n$ increasing \textemdash{} generally when $j=an$ with $0 < a \le 1$ \textemdash{} are of the form $d^{-ani}\left(\frac{n}{n(1-ai)}\right)^{\frac{3}{2}}$ and will clearly approach $0$. Therefore, it is safe to assume that removing $\left(\frac{n}{n-ij}\right)^{\frac{3}{2}}$ from the sum will not change the limit:
    \begin{equation*}
            \lim_{n \rightarrow \infty} \frac{C_i(n)}{Q_n} = \lim_{n \rightarrow \infty} \sum_{j=1}^{\lfloor \frac{n}{i} \rfloor}{d^{-ij}} = \sum_{j=1}^{\infty}{d^{-ij}} = \frac{d^{-i}}{1-d^{-i}} = \frac{1}{d^i-1}
    \end{equation*}
\end{proof}

\section{Resistance: Circuits \& Biscuits}
Let $\widetilde{\Omega}_n = \{r(\gamma): \gamma \in \widetilde{\Gamma}_ n\}$ and $\overline{\overline{\Omega}}_n = \{r(\gamma): \gamma \in \overline{\overline{\Gamma}}_ n\}$ be the sets of resistances of all series and parallel $n$-circuits, respectively. We begin investigating $R_n$ by exploring the ranges of these two sets.

\begin{theorem}[Resistance Range]
    \label{rrange}
    The ranges of $\widetilde{\Omega}_n$ and $\overline{\overline{\Omega}}_n$, $\forall n>1$ can be expressed as:
    \begin{equation*}
        \frac{4}{n} \le r(\gamma) \le n,\  \forall \gamma \in \widetilde{\Gamma}_n,\ \ \ \ \frac{1}{n} \le r(\gamma) \le \frac{n}{4},\  \forall \gamma \in \overline{\overline{\Gamma}}_n.
    \end{equation*}
\end{theorem}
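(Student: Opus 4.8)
The plan is to reduce everything to the series case and then dualize via inversion. First I would establish a crude two–sided bound as a lemma: every $k$-circuit $\gamma$ (series, parallel, or unit) satisfies $1/k \le r(\gamma) \le k$. This follows by strong induction on the number of resistors $k$. The base case $k=1$ is the unit circuit with $r=1$. For the inductive step, a genuine series $k$-circuit decomposes as a series connection of parallel sub-circuits $\alpha_1,\dots,\alpha_m$ of sizes $n_i < k$ summing to $k$, so $r(\gamma)=\sum_i r(\alpha_i)$; the induction hypothesis gives $r(\alpha_i)\in[1/n_i,n_i]$, whence $r(\gamma)\le\sum_i n_i=k$ and $r(\gamma)\ge \max_i 1/n_i\ge 1/k$. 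The parallel case is symmetric, using $r(\gamma)=\bigl(\sum_j 1/r(\beta_j)\bigr)^{-1}$ and the observation that reciprocating keeps each $1/r(\beta_j)$ inside $[1/n_j,n_j]$.

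With this lemma in hand, the series refinement follows quickly. Fix $n>1$ and a series $n$-circuit $\gamma$. Since a ``series connection'' of a single block is just that block, the OPoM representation forces a genuine series $n$-circuit with $n>1$ to be a series connection of at least two parallel sub-circuits; write $\subC{\gamma}=\{\alpha_1,\dots,\alpha_m\}$ with $m\ge 2$ and sizes $n_i$ summing to $n$. For the upper bound, the crude lemma gives $r(\gamma)=\sum_i r(\alpha_i)\le\sum_i n_i=n$, attained by the all-series circuit. For the lower bound I would apply the crude lemma to each parallel sub-circuit to get $r(\gamma)\ge\sum_i 1/n_i$, and then invoke the AM--HM inequality: $\sum_{i=1}^m 1/n_i \ge m^2/(\sum_i n_i) = m^2/n \ge 4/n$, where the final step is exactly where the constraint $m\ge 2$ enters. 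This settles $4/n\le r(\gamma)\le n$ for every series $n$-circuit.

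Finally, the parallel bounds come for free from Theorem \ref{inverse_resistance}. Inversion is a bijection between series and parallel $n$-circuits, and $r(\gamma)r(\gamma^*)=1$. Hence every parallel $n$-circuit is $\gamma^*$ for a unique series $n$-circuit $\gamma$, and reciprocating the series range $[4/n,n]$ yields $r(\gamma^*)=1/r(\gamma)\in[1/n,n/4]$, which is precisely the claimed parallel range.

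The main obstacle I anticipate is not any single calculation but getting the two structural conventions exactly right: that the sub-circuits of a series circuit are parallel (so the crude lemma applies to them), and, crucially, that a genuine series $n$-circuit with $n>1$ has at least two sub-circuits. The entire gap between the naive bound $1/n$ and the sharp bound $4/n$ lives in the inequality $m^2\ge 4$, so the argument collapses if the $m\ge 2$ constraint is overlooked. Everything else — the crude induction and the reciprocation step — is routine once this point is pinned down.
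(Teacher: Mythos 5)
Your proposal is correct, and it reaches the sharp constant $4/n$ by a genuinely different route than the paper. The two arguments agree on the framing: both reduce the series lower bound to bounding $\sum_i 1/n_i$ where the $n_i$ are the sizes of the parallel sub-circuits (the paper obtains $r(\alpha_i)\ge 1/n_i$ by applying Theorem \ref{inverse_resistance} to the trivial series upper bound, you obtain it from your crude inductive lemma), and both dispose of the parallel half of the statement by inversion, exactly as you do. The difference is the heart of the proof: the paper treats the sizes as continuous variables, computes partial derivatives of $\sum_i 1/\alpha_i$ under the constraint $\sum_i\alpha_i=n$, deduces that the only critical configuration has $k=2$ blocks, and then uses the identity $1/\alpha_1+1/\alpha_2 = 4n/\bigl(n^2-(\alpha_1-\alpha_2)^2\bigr)$ to conclude $r\ge 4/n$. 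You replace this entire optimization with one application of AM--HM, $\sum_i 1/n_i \ge m^2/n \ge 4/n$ for $m\ge 2$, which is shorter, avoids the delicacies of the paper's calculus step (discrete variables treated as continuous, critical points found but not certified as minima, and the degenerate $k=1$ case silently excluded), and isolates exactly where the constant $4$ comes from. What the paper's argument buys in exchange is an explicit identification of the extremal circuit — two balanced parallel blocks of $n/2$ resistors each — though the equality case of AM--HM would recover the same information from your proof. You are also right that the whole argument hinges on the convention that a genuine series $n$-circuit with $n>1$ has at least two sub-circuits; the paper relies on this implicitly (its theorem would be false otherwise, since a single all-parallel block has resistance $1/n < 4/n$), so flagging it explicitly is a strength of your write-up rather than a gap.
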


\begin{proof}
    It is clear that $r(\gamma) \le n,\  \forall \gamma \in \widetilde{\Gamma}_n$. It immediately follows from \ref{inverse_resistance} that $\frac{1}{n} \le r(\gamma),\  \forall \gamma \in \overline{\overline{\Gamma}}_n$. Thus, if $\widetilde{\gamma}_{min} = (\{\gamma_1, \gamma_2, \gamma_3, \cdots, \gamma_k\}, \{\})$ is the series $n$-circuit with the least resistance, then $r(\gamma_i) = \frac{1}{\alpha_i}$,\  $\forall i$, where $\alpha_i$ is the number of resistors in $\gamma_i$. This gives:
    \begin{equation*}
        \sum_{i=1}^k \alpha_i = n,\ \ \ \ \ \ r(\widetilde{\gamma}_{min}) = \sum_{i=1}^k \frac{1}{\alpha_i}
    \end{equation*}
    $r(\widetilde{\gamma}_{min})$ can be found by analyzing its partial derivatives with respect to $\alpha_i$:
    \begin{equation*}
        \begin{split}
            r(\widetilde{\gamma}_{min}) &= \sum_{j=1}^k \frac{1}{\alpha_i} = \frac{1}{\alpha_i} + \sum_{1 \le j \le k, j\ne i} \frac{1}{\left(n - \sum\limits_{1 \le l \le k, l \ne i, j} \alpha_l\right) - \alpha_i} \\
            \Rightarrow \frac{\partial r(\widetilde{\gamma}_{min})}{\partial \alpha_i} &= - \frac{1}{\alpha_1^2} + \sum_{1 \le j \le k, j\ne i} -\frac{-1}{\left( \left(n - \sum\limits_{1 \le l \le k, l \ne i, j} \alpha_l\right) - \alpha_i \right)^2} \\
            &= -\frac{1}{\alpha_i^2} + \sum_{1 \le j \le k, j\ne i} \frac{1}{\alpha_j^2} = -\frac{2}{\alpha_i^2} + \sum_{j=1}^k \frac{1}{\alpha_j^2}
        \end{split}
    \end{equation*}
    The minimum occurs when $\frac{\partial r(\widetilde{\gamma}_{min})}{\partial \alpha_i} = 0,\ \forall i$. In which case:
    \begin{equation*}
        \begin{split}
            \sum_{j=1}^k \frac{1}{\alpha_j^2} = \frac{2}{\alpha_i^2},\ \forall i 
            &\Rightarrow\sum_{i=1}^k{\sum_{j=1}^k \frac{1}{\alpha_j^2}} = \sum_{i=1}^k{\frac{2}{\alpha_i^2}} \\
            \Rightarrow\sum_{j=1}^k{\sum_{i=1}^k \frac{1}{\alpha_j^2}} = \sum_{i=1}^k{\frac{2}{\alpha_i^2}} 
            &\Rightarrow k\sum_{j=1}^k \frac{1}{\alpha_j^2} = 2\sum_{i=1}^k{\frac{1}{\alpha_i^2}} \\
            \Rightarrow k &= 2
        \end{split}
    \end{equation*}
    Thus, $\widetilde{\gamma}_{min} = (\{\gamma_1, \gamma_2\}, \{\})$ and  $r(\widetilde{\gamma}_{min}) = \frac{1}{\alpha_1} + \frac{1}{\alpha_2}$, with $\alpha_1 + \alpha_2 = n$. Note:
    \begin{equation*}
        \frac{1}{\alpha_1} + \frac{1}{\alpha_2} = \frac{\alpha_1 + \alpha_2}{\alpha_1\alpha_2} = \frac{\alpha_1 + \alpha_2}{\frac{1}{4}\left((\alpha_1 + \alpha_2)^2-(\alpha_1 - \alpha_2)^2\right)} = \frac{4n}{n^2-(\alpha_1 - \alpha_2)^2}
    \end{equation*}
    Thus $r(\widetilde{\gamma}_{min})$ is minimized when $\alpha_1$ and $\alpha_2$ are as close to each other as possible. For even $n$, this happens when $\alpha_1 = \alpha_2 = \frac{n}{2}$, and for odd $n$ when $\{\alpha_1, \alpha_2\} = \{\frac{n-1}{2}, \frac{n+1}{2}\}$. Because $\frac{4n}{n^2-1} > \frac{4n}{n^2} \ge \frac{4}{n}$, in either case $r(\widetilde{\gamma}_{min}) \ge \frac{4}{n}$. By \ref{inverse_resistance}, $r(\overline{\overline{\gamma}}_{max}) \le \frac{n}{4}$ as well. 
\end{proof}

\begin{remark}
    The nature of the operations alongside \ref{rrange} strongly suggests the inequality $\overline{\overline{R}}_n \le \widetilde{R}_n$, apparent from numerical analysis as well \textemdash{} see Appendix.
\end{remark}

Applying common analysis tools to understand $R_n$ begins to fail at about this point, motivating the simplification of the system. We propose the concept of \textbf{biscuits}, a portmanteau of \textbf{bi}nary cir\textbf{cuits}. Define an $n$-biscuit as a circuit containing $n$ identical resistors, recursively constructed by applying one of the two operations of connecting a resistor in series or parallel. It is immediately apparent that there are no more than $2^{n-1}$ unique $n$-biscuits.

\begin{table}[H]
\centering
\begin{tabular}{c|c|c}
    Quantity & $n$-circuits & $n$-biscuits \\ 
    \hline
    Count & $Q_n = 2q_n-\delta_{1n}$ & $\frak{Q}_n = 2\frak{q}_n-\delta_{1n}$ \\
    Total Resistance & $R_n$ & $\frak{R}_n$ \\
    Average Resistance & $M_n$ & $\frak{M}_n$ \\
    Circuits & $\Gamma_n$ & $\mathcal{B}_n$ \\
    Resistances & $\Omega_n$ & $\Psi_n$ \\

\end{tabular}
    \caption{Symbols for analogous quantities of $n$-circuits and $n$-biscuits.}
    \label{tab:biscuits}
\end{table}

\begin{theorem}[Biscuit Injection and Inversion]
    \label{injection}
    Let $\widetilde{\varphi}: \Psi_n \rightarrow \widetilde{\Psi}_{n+1}$ and $\overline{\overline{\varphi}}: \Psi_n \rightarrow \overline{\overline{\Psi}}_{n+1}$ represent adding a resistor in series and parallel, respectively. Then, they are both injections. Furthermore, 
    \begin{equation*}
        \widetilde{\varphi}\left(r\right)\overline{\overline{\varphi}}\left(\frac{1}{r}\right) = 1
    \end{equation*}
\end{theorem}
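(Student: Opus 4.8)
The plan is to reduce everything to the explicit effect each operation has on a resistance value. If an $n$-biscuit has resistance $r$, then connecting a single unit resistor in series yields a circuit of resistance $r+1$, while connecting one in parallel yields $\frac{1}{\frac{1}{r}+1} = \frac{r}{r+1}$. Because the series and parallel laws depend only on $r$ and not on the internal structure of the biscuit realizing it, both $\widetilde{\varphi}(r) = r+1$ and $\overline{\overline{\varphi}}(r) = \frac{r}{r+1}$ are well-defined functions on the set $\Psi_n$ of resistance values. Moreover, each output is genuinely the resistance of a series (resp. parallel) $(n+1)$-biscuit — the one obtained by appending the new resistor — so the stated codomains $\widetilde{\Psi}_{n+1}$ and $\overline{\overline{\Psi}}_{n+1}$ are correct. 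First I would record these two formulas explicitly, since the remaining claims are immediate from them.

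For injectivity I would invoke strict monotonicity on the positive reals. The map $\widetilde{\varphi}(r) = r+1$ is a translation, hence injective. For $\overline{\overline{\varphi}}(r) = \frac{r}{r+1}$ one can either differentiate — $\frac{d}{dr}\frac{r}{r+1} = \frac{1}{(r+1)^2} > 0$ for $r>0$ — or argue directly: if $\frac{r_1}{r_1+1} = \frac{r_2}{r_2+1}$, cross-multiplying gives $r_1 = r_2$. Either way $\overline{\overline{\varphi}}$ is injective on $\Psi_n \subseteq \mathbb{R}^+$.

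The inversion identity then follows by direct substitution: $\widetilde{\varphi}(r)\,\overline{\overline{\varphi}}(1/r) = (r+1)\cdot \frac{1/r}{\,1/r+1\,} = (r+1)\cdot\frac{1}{r+1} = 1$. I would also point out the conceptual reason, which ties back to \ref{inverse_resistance}: inverting a biscuit swaps series and parallel and reciprocates its resistance, so the inverse of the series $(n+1)$-biscuit built by appending a resistor to a biscuit of resistance $r$ is precisely the parallel $(n+1)$-biscuit built by appending a resistor to the inverse biscuit of resistance $1/r$. Since \ref{inverse_resistance} guarantees that the product of a biscuit's resistance with its inverse's is $1$, this yields $\widetilde{\varphi}(r)\,\overline{\overline{\varphi}}(1/r) = 1$ without any computation.

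There is no serious obstacle here; the only point requiring a moment's care is the well-definedness of $\widetilde{\varphi}$ and $\overline{\overline{\varphi}}$ as maps on resistance values rather than on biscuits, which I would justify by observing that the series and parallel resistance laws are functions of $r$ alone. Everything else is elementary algebra on $\mathbb{R}^+$, and one even obtains surjectivity for free — every series $(n+1)$-biscuit arises by appending a series resistor to its underlying $n$-biscuit — so each $\varphi$ is in fact a bijection, though injectivity is all the statement requires.
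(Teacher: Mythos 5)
Your proposal is correct and follows essentially the same route as the paper: write the two maps explicitly as $\widetilde{\varphi}(r)=r+1$ and $\overline{\overline{\varphi}}(r)=\frac{r}{r+1}$ (the paper phrases this with $r=\frac{a}{b}$), deduce injectivity by elementary algebra, and verify the inversion identity by direct substitution. Your added remarks --- well-definedness on resistance values, the conceptual link to Theorem \ref{inverse_resistance}, and surjectivity onto $\widetilde{\Psi}_{n+1}$ and $\overline{\overline{\Psi}}_{n+1}$ --- are sound refinements rather than a different method.
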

\begin{proof}
    First note that the two functions can be expressed as follows:
    \begin{equation*}
        \begin{split}
            \widetilde{\varphi}\left(\frac{a}{b}\right) &= \frac{a}{b} + 1 = \frac{a+b}{b} \\
            \overline{\overline{\varphi}}\left(\frac{a}{b}\right) &= \frac{1}{\frac{1}{\frac{a}{b}} + \frac{1}{1}}= \frac{a}{a+b}
        \end{split}
    \end{equation*}

    So $\widetilde{\varphi}\left(r\right)\overline{\overline{\varphi}}\left(\frac{1}{r}\right) = 1$ follows immediately. By way of contradiction assume that $\exists r_1, r_2 \in \Psi_n. r_1 \ne r_2,  \widetilde{\varphi}(r_1) = \widetilde{\varphi}(r_2)$. Writing $r_1 = \frac{a}{b}$ and $r_2 = \frac{c}{d}$, 
    \begin{equation*}
        \frac{a+b}{b} = \frac{c + d}{d} \Rightarrow \frac{a}{b} = \frac{c}{d}
    \end{equation*}
    which is a contradiction. A similar argument can be used for the injectivity of $\overline{\overline{\varphi}}_n$. Furthermore, a bijection is revealed, analogous to \ref{inverse_resistance}.
\end{proof}

\begin{corollary}
[Uniqueness]
    $\Psi_n$, the multiset of resistances of all $n$-biscuits does not include any duplicates. That is, $n$-biscuits have unique resistances. 
\end{corollary}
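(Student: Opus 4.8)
The plan is to induct on $n$ with Theorem~\ref{injection} as the engine. The base case $n=1$ is immediate: there is exactly one $1$-biscuit (the unit resistor), so $\Psi_1 = \{1\}$ has no duplicates. For the inductive step I would first exploit the recursive definition of biscuits, which exhibits every $(n+1)$-biscuit as some $n$-biscuit followed by one final series or parallel step. Passing to resistances, this says $\Psi_{n+1} = \widetilde{\Psi}_{n+1} \cup \overline{\overline{\Psi}}_{n+1}$, where $\widetilde{\Psi}_{n+1} = \widetilde{\varphi}(\Psi_n)$ collects the resistances whose last operation was series and $\overline{\overline{\Psi}}_{n+1} = \overline{\overline{\varphi}}(\Psi_n)$ collects those whose last operation was parallel.

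Assuming inductively that $\Psi_n$ has no duplicates, I would prove duplicate-freeness of $\Psi_{n+1}$ in two stages. Within each block, distinctness is exactly injectivity: since $\widetilde{\varphi}$ and $\overline{\overline{\varphi}}$ are injections by Theorem~\ref{injection} and their common domain $\Psi_n$ consists of distinct values by the induction hypothesis, the images $\widetilde{\Psi}_{n+1}$ and $\overline{\overline{\Psi}}_{n+1}$ each consist of distinct resistances. What remains — and what carries essentially all the content — is to rule out a collision \emph{across} the two blocks, i.e.\ a series-type resistance equalling a parallel-type one. For this I would use the explicit formulas recorded in Theorem~\ref{injection}: because every biscuit resistance $r$ is positive, $\widetilde{\varphi}(r) = r+1 > 1$ while $\overline{\overline{\varphi}}(r) = \frac{r}{r+1} < 1$. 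Hence $\widetilde{\Psi}_{n+1} \subseteq (1,\infty)$ and $\overline{\overline{\Psi}}_{n+1} \subseteq (0,1)$ lie in disjoint intervals, so no element of one block can coincide with an element of the other. Combining the two stages, all elements of $\Psi_{n+1}$ are distinct, which closes the induction.

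I expect the only real obstacle to be the recognition that injectivity of the two maps is not by itself enough; one must separately forbid a series-built resistance from matching a parallel-built one, since the two maps share a domain. The range observation $r+1 > 1 > \frac{r}{r+1}$ dispatches this immediately, and it is simply the physical intuition that a final series step pushes the resistance above $1$ while a final parallel step pulls it below $1$. Everything else is bookkeeping, so I would keep the write-up short and lean entirely on Theorem~\ref{injection}.
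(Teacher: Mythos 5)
Your proposal is correct and follows essentially the same route as the paper: induction using the injectivity of $\widetilde{\varphi}$ and $\overline{\overline{\varphi}}$ from Theorem~\ref{injection} for distinctness within each block, plus the observation that a final series step lands in $(1,\infty)$ while a final parallel step lands in $(0,1)$, which is exactly the paper's remark that $\frac{a+b}{b}$ and $\frac{a}{a+b}$ can never coincide for $a,b>0$. Your write-up is somewhat more explicit about the cross-block collision being the essential point, but the content is identical.
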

\begin{proof}
    Using induction with \ref{injection}, the uniqueness of elements of $\widetilde{\Psi}_n$ and $\overline{\overline{\Psi}}_n$ can be proved. It is further apparent that these two sets can only overlap when an element has resistance $1$, something that never happens after $n = 1$ because $\frac{a}{a+b}$ and $\frac{a+b}{b}$ cannot have equal numerators and denominators \textemdash{} $a, b > 0$. This further implies that $\mathcal{B}_n$ contains exactly $2^{n-1}$ elements.
\end{proof}
\newpage
\begin{theorem}[Biscuit Means]
    \label{frakm}
    The average resistance of all $n$-biscuits is given by:
    \begin{equation*}
        \frak{M}_n=\frac{3}{2}-2^{-n}, \text{\space\space\space\space\space} \forall n \in \mathbb{N}
    \end{equation*}
    Moreover , 
    \begin{equation*}
        \widetilde{\frak{M}}_n=\frac{5}{2}-2^{1-n}, \text{\space\space\space\space\space} \overline{\overline{\frak{M}}}_n=\frac{1}{2}, \text{\space\space\space\space\space} \forall n>1
    \end{equation*}

\end{theorem}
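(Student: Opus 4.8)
The plan is to turn the two injections of Theorem~\ref{injection} into recursions for the total resistances $\widetilde{\frak{R}}_n$, $\overline{\overline{\frak{R}}}_n$, and $\frak{R}_n$, and then read off the means by dividing by the counts. First I would record the counts: by the Uniqueness corollary $|\mathcal{B}_n| = 2^{n-1}$, so $\frak{Q}_n = 2^{n-1}$; and since $\widetilde{\varphi},\overline{\overline{\varphi}}$ are injections out of $\Psi_{n-1}$, we get $\widetilde{\frak{Q}}_n = |\widetilde{\varphi}(\Psi_{n-1})| = 2^{n-2}$ and likewise $\overline{\overline{\frak{Q}}}_n = 2^{n-2}$ for $n>1$, the two families being disjoint by Uniqueness (they could only coincide at resistance $1$, which is excluded for $n>1$).

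For the series total I would use $\widetilde{\varphi}(r) = r+1$ from Theorem~\ref{injection}, together with the fact that $\widetilde{\varphi}$ maps the multiset $\Psi_{n-1}$ bijectively onto $\widetilde{\Psi}_n$, to obtain $\widetilde{\frak{R}}_n = \sum_{r \in \Psi_{n-1}}(r+1) = \frak{R}_{n-1} + 2^{n-2}$.

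The crux is the parallel total, and I expect it to be the main obstacle. I would first establish that $\Psi_n$ is closed under reciprocation $r \mapsto 1/r$ as a multiset: this is the biscuit analogue of Theorem~\ref{inverse_resistance} (invert a biscuit by swapping series and parallel at every construction step), and it also follows by a short induction from the relation $\widetilde{\varphi}(r)\,\overline{\overline{\varphi}}(1/r) = 1$ noted in Theorem~\ref{injection}. Since $\overline{\overline{\varphi}}(r) = r/(r+1)$, a reciprocal pair satisfies $\overline{\overline{\varphi}}(r) + \overline{\overline{\varphi}}(1/r) = 1$. Averaging the defining sum $\overline{\overline{\frak{R}}}_n = \sum_{r \in \Psi_{n-1}} \overline{\overline{\varphi}}(r)$ against the reciprocation bijection of $\Psi_{n-1}$ collapses each contribution to $\tfrac{1}{2}$, giving $\overline{\overline{\frak{R}}}_n = \tfrac{1}{2}\,|\Psi_{n-1}| = 2^{n-3}$ and hence $\overline{\overline{\frak{M}}}_n = \tfrac{1}{2}$ directly. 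This averaging step depends entirely on the reciprocal symmetry of $\Psi_n$, so that symmetry is what must be justified carefully; once it is in hand the parallel mean falls out with no computation of individual resistances.

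Finally I would combine the pieces. Adding the two totals gives $\frak{R}_n = \widetilde{\frak{R}}_n + \overline{\overline{\frak{R}}}_n = \frak{R}_{n-1} + 3\cdot 2^{n-3}$ for $n \ge 2$, a first-order linear recurrence with $\frak{R}_1 = 1$ whose geometric-sum solution is $\frak{R}_n = 3\cdot 2^{n-2} - \tfrac{1}{2}$. Dividing by $\frak{Q}_n = 2^{n-1}$ yields $\frak{M}_n = \tfrac{3}{2} - 2^{-n}$; and substituting into $\widetilde{\frak{R}}_n = \frak{R}_{n-1} + 2^{n-2} = 5\cdot 2^{n-3} - \tfrac{1}{2}$ and dividing by $\widetilde{\frak{Q}}_n = 2^{n-2}$ yields $\widetilde{\frak{M}}_n = \tfrac{5}{2} - 2^{1-n}$. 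I would check $n=1,2,3$ by hand as a sanity test, noting in particular that the closed form for $\frak{M}_n$ already returns $\frak{M}_1 = 1$, so it holds for all $n \in \mathbb{N}$ while the split means require $n>1$.
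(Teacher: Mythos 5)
Your proposal is correct and takes essentially the same route as the paper: both rest on the reciprocal pairing $\overline{\overline{\varphi}}(r)+\overline{\overline{\varphi}}\left(\frac{1}{r}\right)=1$ from Theorem \ref{injection} to get the parallel mean $\frac{1}{2}$, and on the series shift $\widetilde{\frak{R}}_n=\frak{R}_{n-1}+\frak{Q}_{n-1}$, followed by solving a first-order linear recurrence. The only cosmetic difference is that you solve the recurrence for the total resistances and divide by counts at the end, whereas the paper solves the equivalent recurrence directly for the means $\widetilde{\frak{M}}_n$ and then averages; your bookkeeping with $\frak{Q}_n=2^{n-1}$ even handles the $n=2$ base case slightly more cleanly.
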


\begin{proof}
    As $\overline{\overline{\Psi}}_n = \overline{\overline{\varphi}}(\Psi_{n-1}) = \overline{\overline{\varphi}}(\widetilde{\Psi}_{n-1}) \cup \overline{\overline{\varphi}}(\overline{\overline{\Psi}}_{n-1}),\ \forall n>1$, by \ref{injection} one can write:
        
    \begin{equation*}
        \begin{split}
            \overline{\overline{\frak{R}}}_n &= \sum_{r \in \widetilde{\Psi}_{n-1}} {\overline{\overline{\varphi}}(r)} + \sum_{r \in \overline{\overline{\Psi}}_{n-1}} {\overline{\overline{\varphi}}(r)} \\
            &= \sum_{r \in \widetilde{\Psi}_{n-1}}{\overline{\overline{\varphi}}\left(\frac{r}{1}\right) + \overline{\overline{\varphi}}\left(\frac{1}{r}\right)} \\
            &= \sum_{r \in \widetilde{\Psi}_{n-1}} {\frac{r}{r+1} + \frac{1}{r+1}}\\
            &= \frak{q}_{n-1}\\                
        \end{split}
    \end{equation*}
    Therefore:
    \begin{equation*}
        \overline{\overline{\frak{M}}}_n=\frac{\overline{\overline{\frak{R}}}_n}{\frak{q}_n}= \frac{\frak{q}_{n-1}}{\frak{q}_n}=\frac{1}{2}
    \end{equation*}
    Using this result, a recurrence relation for the average resistance of series $n$-biscuits can be derived as follows:
    \begin{equation*}
        \begin{split}
            \widetilde{\frak{R}}_n&=\frak{R}_{n-1}+\frak{Q}_{n-1}\\
            &=\widetilde{\frak{R}}_{n-1}+\overline{\overline{\frak{R}}}_{n-1}+2\frak{q}_{n-1}\\
            \Rightarrow 2\widetilde{\frak{M}}_n&=\widetilde{\frak{M}}_{n-1}+\frac{1}{2}+2\\
            \Rightarrow \widetilde{\frak{M}}_{n+1}&=\frac{1}{2}\widetilde{\frak{M}}_n+\frac{5}{4}, \text{\space\space\space\space\space} \widetilde{\frak{M}}_2=2
        \end{split}
    \end{equation*}
    Solving this recurrence relation results in:
    \begin{equation*}
        \widetilde{\frak{M}}_n=\frac{5}{2}-2^{1-n}, \text{\space\space\space\space\space} \forall n > 1
    \end{equation*}
    Finally, averaging over means gives:
    \begin{equation*}
        \frak{M}_n=\frac{\widetilde{\frak{M}}_n+\overline{\overline{\frak{M}}}_n}{2}=\frac{3}{2}-2^{-n}, \text{\space\space\space\space\space} \forall n>1
    \end{equation*}
    Coincidentally, the expression is also satisfied for $n=1$ as $\frak{M}_1=1=\frac{3}{2}-2^{-1}$.
\end{proof}
\begin{remark}
    It follows that $\forall n>1$:
    \begin{equation*}
        \frak{R}_n = \frac{3}{4}\times2^n - \frac{1}{2}, \text{\space\space\space\space\space} \widetilde{\frak{R}}_n = \frac{5}{8}\times2^{n} - \frac{1}{2}, \text{\space\space\space\space\space} \overline{\overline{\frak{R}}}_n =  \frac{1}{8}\times2^{n}
    \end{equation*}
\end{remark}
\begin{lemma}
    \label{inequality_lemma}
    For any two positive sequences $\{ a_i \}_{i=1}^m$ and $\{ b_j \}_{j=1}^n$, the following inequality holds:

    \begin{equation*}
        \frac{1}{mn}\sum_{i=1}^m{\sum_{j=1}^n {{\frac{1}{\frac{1}{a_i} + \frac{1}{b_j}}}} }  \le \frac{1}{\frac{1}{\frac{1}{m}\sum_{i=1}^m{a_i}} + \frac{1}{\frac{1}{n}\sum_{i=1}^n{b_i}}}
    \end{equation*}
    
\end{lemma}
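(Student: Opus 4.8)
The plan is to recognize the summand $\frac{1}{1/a_i + 1/b_j}$ as the value of a single ``parallel-combination'' function $f(a,b) = \frac{ab}{a+b}$, and to reduce the entire inequality to the concavity of $f$ on the positive quadrant. Writing $\bar a = \frac{1}{m}\sum_{i} a_i$ and $\bar b = \frac{1}{n}\sum_{j} b_j$ for the arithmetic means, the left-hand side is exactly $\frac{1}{mn}\sum_{i,j} f(a_i,b_j)$ and the right-hand side is exactly $f(\bar a, \bar b)$, since the marginal average of $a_i$ over all $mn$ pairs is $\bar a$ and similarly for $b_j$. So the claim becomes the statement that the average of $f$ over the $mn$ pairs is at most $f$ evaluated at the pair of averages, i.e.\ a two-variable Jensen inequality.

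First I would establish that $f$ is concave on $\{a,b>0\}$. This is a direct Hessian computation: $f_a = b^2/(a+b)^2$ and $f_b = a^2/(a+b)^2$, and the second derivatives assemble into the Hessian $\frac{2}{(a+b)^3}\begin{pmatrix} -b^2 & ab\\ ab & -a^2\end{pmatrix}$, whose diagonal entries are negative and whose inner determinant is $a^2b^2 - a^2b^2 = 0$. Hence the Hessian is negative semidefinite everywhere on the positive quadrant, so $f$ is concave. (Equivalently, $f$ is half the harmonic mean of $a$ and $b$, a standard concave mean.)

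With concavity in hand I would finish via the supporting-hyperplane form of Jensen rather than a black-box appeal, since it makes the cancellation transparent. Concavity gives, for every $(a,b)$ in the quadrant, $f(a,b) \le f(\bar a, \bar b) + f_a(\bar a,\bar b)(a-\bar a) + f_b(\bar a,\bar b)(b-\bar b)$. Substituting $(a,b)=(a_i,b_j)$ and summing over all $i,j$, the two linear correction terms vanish because $\sum_{i,j}(a_i-\bar a) = n\sum_i(a_i-\bar a)=0$ and likewise $\sum_{i,j}(b_j-\bar b)=0$. What remains is $\sum_{i,j} f(a_i,b_j) \le mn\, f(\bar a,\bar b)$, which is precisely the desired inequality after dividing by $mn$.

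The only genuine obstacle is the concavity step, and even there the subtlety is mild: the Hessian is negative \emph{semi}definite rather than negative definite (its determinant is identically zero, reflecting the degree-one homogeneity of $f$). One simply has to observe that negative semidefiniteness already suffices for concavity, so no strictness is required. Everything else is bookkeeping, namely cancelling the first-order deviations from the means, and the inequality becomes an equality exactly when all the $a_i$ coincide and all the $b_j$ coincide.
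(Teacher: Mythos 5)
Your proof is correct, but it takes a genuinely different route from the paper's. The paper never invokes joint concavity: it fixes $j$, applies the AM--HM inequality to the sequence $\{a_i+b_j\}_{i=1}^m$, and algebraically massages it into the one-variable bound $\frac{1}{m}\sum_{i} \frac{a_i b_j}{a_i+b_j} \le \frac{\bar a\, b_j}{\bar a + b_j}$; it then chains this with its mirror image in the $b$-variable to collapse the double sum, i.e.\ $\frac{1}{mn}\sum_{i,j} f(a_i,b_j) \le \frac{1}{m}\sum_i f(a_i,\bar b) \le f(\bar a,\bar b)$. In effect the paper only uses concavity of $f(a,b)=\frac{ab}{a+b}$ in each variable \emph{separately} (established with no calculus), together with the product structure of the index set, which is what licenses applying the two one-dimensional bounds in succession. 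You instead establish \emph{joint} concavity via the Hessian and finish with a two-variable Jensen inequality in supporting-hyperplane form. Your route is shorter once concavity is in hand and is strictly more general: Jensen applies to an average over any collection of pairs $(a,b)$ with prescribed mean, whereas the coordinate-by-coordinate chaining genuinely needs the double sum to factor over the index rectangle. The paper's route buys elementarity: only AM--HM and algebra, no differentiability or semidefiniteness check. One caveat on your closing remark: since the Hessian is only negative semidefinite ($f$ is homogeneous of degree one, linear along rays through the origin), the equality discussion requires a little care; your stated condition is nevertheless right, because for fixed second argument $f$ is strictly concave in the first ($f_{aa}=-2b^2/(a+b)^3<0$), and the product structure of the pairs then forces all $a_i$ to coincide and all $b_j$ to coincide.
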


\begin{proof}
    Start by considering the \emph{AM-HM} inequality for the sequence $\{ a_i + b_j\}_{i=1}^m$, where $j$ is constant:
    \begin{equation*}
        \frac{m}{\sum _{i=1}^m{\frac{1}{a_i + b_j}}} \le \frac{1}{m}\sum _{i=1}^m{a_i + b_j}
    \end{equation*}
    This inequality can be modified to get the result as follows:
    \begin{equation*}
        \begin{split}
            \frac{m}{\sum _{i=1}^m{\frac{1}{a_i + b_j}}} &\le \frac{1}{m}\sum _{i=1}^m{a_i + b_j} \\
            \Rightarrow \frac{m}{\frac{1}{m}\sum _{i=1}^m{a_i + b_j}} &\le \sum _{i=1}^m{\frac{1}{a_i + b_j}} \\
            \Rightarrow m - \sum _{i=1}^m{\frac{b_j}{a_i + b_j}} &\le m - \frac{b_jm}{\frac{1}{m}\sum _{i=1}^m{a_i + b_j}} \\
            \Rightarrow \sum _{i=1}^m{\left(1-\frac{b_j}{a_i + b_j}\right)} &\le \frac{\sum _{i=1}^m{(a_i + b_j)} - b_jm}{\frac{1}{m}\sum _{i=1}^m{a_i + b_j}} \\
            \Rightarrow \sum _{i=1}^m{\frac{a_i}{a_i + b_j}} &\le \frac{\sum _{i=1}^m{a_i}}{\frac{1}{m}\sum _{i=1}^m{a_i + b_j}} \\
            \Rightarrow \frac{1}{m}\sum _{i=1}^m{\frac{a_ib_j}{a_i + b_j}} &\le \frac{\frac{1}{m}\sum _{i=1}^m{a_ib_j}}{\frac{1}{m}\left(\sum _{i=1}^m{a_i}\right) + b_j} \\
            \Rightarrow \frac{1}{m}\sum _{i=1}^m{{\frac{1}{\frac{1}{a_i} + \frac{1}{b_j}}}} &\le \frac{1}{\frac{1}{\frac{1}{m}\sum_{i=1}^m{a_i}} + \frac{1}{b_j}}
        \end{split}
    \end{equation*}

    Clearly, the symmetric version of this inequality also holds. Namely:
    \begin{equation*}
        \frac{1}{n}\sum _{j=1}^n{{\frac{1}{\frac{1}{b_j} + \frac{1}{a_i}}}} \le \frac{1}{\frac{1}{\frac{1}{n}\sum_{j=1}^n{b_j}} + \frac{1}{a_i}}
    \end{equation*}
    The result follows immediately:
    \begin{equation*}
        \frac{1}{mn}\sum_{i=1}^m{\sum_{j=1}^n {{\frac{1}{\frac{1}{a_i} + \frac{1}{b_j}}}} } \le \frac{1}{m}\sum_{i=1}^m{\frac{1}{\frac{1}{\frac{1}{n}\sum_{j=1}^n{b_j}} + \frac{1}{a_i}}} \le \frac{1}{\frac{1}{\frac{1}{m}\sum_{i=1}^m{a_i}} + \frac{1}{\frac{1}{n}\sum_{j=1}^n{b_i}}}
    \end{equation*}
    
\end{proof}

We seek to establish a connection between circuits and biscuits that will later allow us to finally bound $M_n$. Define rearranging an $n$-biscuit as combining its single resistors in one or more depths to create sub-circuits without modifying the non-unit sub-circuits. Then:
\begin{theorem}[The Connection]
    \label{the_connection}
     Rearranging series and parallel $n$-biscuits, on average, decreases and increases their resistance respectively.
\end{theorem}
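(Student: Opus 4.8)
The plan is to prove the two halves as inverses of one another and then reduce everything to a single local estimate that is propagated through the circuit. By Theorem~\ref{inverse_resistance}, inversion carries a series biscuit to a parallel biscuit and replaces every resistance $r$ by $1/r$; a rearrangement of a series biscuit corresponds under this map to a rearrangement of the inverse parallel biscuit, and an \emph{average decrease} of $r$ becomes an \emph{average increase} of $1/r$. So it is enough to prove the series statement and read the parallel statement off by inverting. I would therefore fix a series biscuit $b$ and ask how its resistance moves when its single-resistor legs are bundled into sub-circuits while the backbone (the non-unit sub-circuits) is left intact.

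First I would isolate the effect at one node. A series node of $b$ has the form $(\{u,\dots,u,\beta\},\{\})$, so its resistance is the additive sum of the contributions of its unit legs and of $r(\beta)$. Combining $m$ of those legs into one parallel sub-circuit replaces their contribution $m$ by the resistance of a parallel $m$-circuit, which by Theorem~\ref{rrange} is at most $m/4$, hence strictly below $m$ for $m\ge 2$; averaging over all admissible fillings of the created sub-circuit (whose mean resistance is $\overline{\overline{M}}_m \le m/4$) can therefore only lower the node's resistance. Because the node is series, and series combination is additive, this expectation passes through the sum linearly, with no inequality lost. This is the engine of the argument: at a series node, bundling legs into parallel sub-circuits decreases the expected resistance.

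Next I would propagate the local decrease up to $r(b)$ itself. Both series and parallel combination are strictly increasing in each of their arguments, so lowering the resistance of any sub-circuit lowers the resistance of the whole circuit. The delicate point is that the average over fillings does not commute with the parallel operation, and here I would invoke Lemma~\ref{inequality_lemma}: its content is precisely that parallel combination is concave, i.e.\ the average of parallel combinations is bounded above by the parallel combination of the averages. Applying this at every parallel node lying on the path from a modified node up to the root lets me replace the randomly-filled sub-resistances by their averages while keeping the inequality pointing the right way, so that the node-level decrease survives all the way to the claim that the average rearranged resistance is at most $r(b)$; inversion then yields the parallel statement.

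The main obstacle I expect is exactly the bookkeeping of signs in this propagation. Bundling legs at a node whose connection type is \emph{opposite} to the outer type of the biscuit pushes the resistance the wrong way locally: in a series biscuit, bundling the legs of an interior parallel node forms series sub-circuits, which (by the conductance bound in Theorem~\ref{rrange}) \emph{raise} that node's resistance and hence $r(b)$. So the argument cannot simply average over every rearrangement blindly; it must be organized — through the inversion duality and the concavity of Lemma~\ref{inequality_lemma} — so that each averaging step is taken in the direction in which the governing operation is additive or concave, and each monotone lift-up step preserves the sign. Pinning down which legs may be bundled and showing that the admissible averaged operation is genuinely monotone, rather than any individual estimate, is the crux; once that framework is fixed, the local computation of the second paragraph and the monotone, lemma-controlled propagation of the third close the proof.
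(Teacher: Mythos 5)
Your setup matches the paper's own proof in its essentials: you exploit the chain structure of biscuits (unit legs plus a single non-unit sub-circuit at each level), the extremality statement of Theorem~\ref{rrange} for the local estimate, monotonicity of the series/parallel operations to propagate a local change to the root, and inversion to pass between the series and parallel halves. But the proposal stops exactly where the theorem actually lives. As your final paragraph concedes, rearrangements at nodes whose connection type is opposite to the outer type of the biscuit (equivalently, rearrangements at even depth) push the resistance in the \emph{wrong} direction, so the claim ``on average decreases/increases'' is a statement about which family of rearrangements dominates in the average. You name this ``the crux'' and then defer to a ``framework'' in which it is resolved --- but that framework is precisely the missing idea, not a step you have supplied. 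Note also that Lemma~\ref{inequality_lemma} cannot fix this: it controls how averages pass through a parallel node (an upper bound in one direction), not the sign of the competition between odd-depth and even-depth rearrangements.

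The paper closes this gap --- admittedly informally --- with a quantitative claim you never state: the changes alternate in sign with depth \emph{and their magnitude decreases as depth increases}, so the net average change carries the sign of the first-depth rearrangements, positive for parallel biscuits and negative for series ones. Some such decay-with-depth estimate (the paper attributes it to ``calculus'' and leaves the rigorous version as an exercise) is indispensable: without it, nothing rules out that the even-depth rearrangements, though individually of opposite sign, are numerous or large enough to flip the average. Until you formulate and prove an analogue of that dominance statement, the local computation and the monotone propagation, both of which are correct, do not yield the theorem.
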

\begin{proof}
    Begin by noting that any $n$-biscuit has at most one non-unit sub-circuit; this is because, by definition, the construction of an $n$-biscuit is invertible. Hence, if one assumes by way of contradiction that there exists an $n$-biscuit with more than one non-unit sub-circuit, after reverting the construction of all of its unit sub-circuits, one can remove neither a series nor a parallel connection to a unit circuit, reaching a contradiction.
    
    Now consider a parallel $n$-biscuit $\beta$ with $k$ unit sub-circuits and a non-unit sub-circuit that is a series $(n-k)$-biscuit. As by \ref{rrange} the connection of the $k$ unit circuits in parallel is the smallest possible value for the resistance of a $k$-circuit, any first-depth rearrangement of this biscuit will result in a positive change in its total resistance. Inversely, any such rearrangement in a series biscuit will result in a negative change; this corresponds to a second-depth rearrangement of $\beta$. This argument can be recursively repeated to show that any odd-depth rearrangement of parallel $n$-biscuits results in a positive change in total resistance and any even-depth rearrangement in a negative; the converse is true for series circuits. It can be shown, using calculus, that the magnitude of change decreases as depth increases, resulting in the statement of the theorem. The rigorous proof is left as an exercise for the reader.
\end{proof}

We now have the required tools to return to the main topic of investigation, the average resistance of $n$-circuits.

\begin{theorem}
    \label{parallel_upper_bound}
    The average resistance of parallel $n$-circuits is bounded by
    \begin{equation*}
         \frac{1}{2} < \overline{\overline{M}}_n < \frac{5}{2}
    \end{equation*}
\end{theorem}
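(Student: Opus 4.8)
The plan is to read both endpoints off of Theorem \ref{frakm} as biscuit means: there $\overline{\overline{\frak{M}}}_n = \frac{1}{2}$ and $\widetilde{\frak{M}}_n = \frac{5}{2} - 2^{1-n} < \frac{5}{2}$, so the target interval is exactly bracketed by the parallel-biscuit mean and the supremum of the series-biscuit mean. The whole strategy is therefore to transfer these two biscuit values onto genuine parallel circuits by means of Theorem \ref{the_connection}, which records that rearranging a biscuit into a more general circuit moves its resistance upward in the parallel case and downward in the series case. I would treat the two inequalities separately, since they call on opposite halves of that theorem.

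For the lower bound I would regard each parallel $n$-circuit as the result of rearranging some parallel $n$-biscuit $b$ (collapsing the sub-circuits that are built purely out of single resistors back to individual resistors recovers such a $b$). Grouping the parallel circuits according to a chosen biscuit $b$, Theorem \ref{the_connection} says that the rearrangements of $b$ have average resistance at least $r(b)$; feeding this into $\overline{\overline{M}}_n = \frac{1}{q_n}\sum_{\gamma}r(\gamma)$ and using that the uniform average of the $r(b)$ over parallel biscuits is $\overline{\overline{\frak{M}}}_n = \frac{1}{2}$ (Theorem \ref{frakm}) should yield $\overline{\overline{M}}_n \ge \frac{1}{2}$. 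The strict inequality would then come from the existence of at least one nontrivial rearrangement, which happens precisely once $n \ge 4$; for $n = 2, 3$ every parallel circuit already is a biscuit and the value is exactly $\frac{1}{2}$, so the statement should really be read as strict only for $n \ge 4$.

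For the upper bound I expect to lean on Lemma \ref{inequality_lemma} rather than on Theorem \ref{the_connection}. Since a parallel $n$-circuit is by definition a parallel combination of two or more series sub-circuits, the lemma's statement — that averaging parallel combinations is at most the parallel combination of the averages — is exactly the tool for pushing an average resistance up to a closed form. I would split parallel circuits by the sizes of their series parts, apply the lemma to bound the contribution of each size class by a parallel combination of the relevant series-circuit means $\widetilde{M}_i$, and then control those means — via Theorem \ref{the_connection} applied to series biscuits together with $\widetilde{\frak{M}}_n < \frac{5}{2}$ — to conclude $\overline{\overline{M}}_n < \frac{5}{2}$. Theorem \ref{inverse_resistance} can be used throughout to pass between the parallel picture ($r$) and the series picture ($1/r$) whenever one side is more convenient.

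The hard part will be making Theorem \ref{the_connection} quantitative. Its own proof is only sketched, and, more seriously, the assignment of a circuit to an underlying biscuit is not unique — a circuit with several sub-circuits that are each buildable from single resistors can be un-rearranged in several ways — so the rearrangement map is many-to-one with non-uniform fibers. Converting the ``on average'' statement into an inequality about the genuine uniform average $\overline{\overline{M}}_n$ thus requires controlling these weights. This is especially delicate for the upper bound, where the weighting correlates against us, which is exactly why I would route that direction through the rigorous Lemma \ref{inequality_lemma} instead of through the rearrangement heuristic.
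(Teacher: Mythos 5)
Your proposal follows the paper's proof in its essentials, and your self-diagnosis of the weak point is exactly right; the one real divergence is in how the upper bound is decomposed. The lower bound is argued identically to the paper (every parallel $n$-circuit is a rearrangement of a parallel $n$-biscuit, so Theorem \ref{the_connection} pushes the average above $\overline{\overline{\frak{M}}}_n = \frac{1}{2}$), and your observation about strictness is a genuine catch: for $n=2,3$ every parallel circuit \emph{is} a biscuit, so $\overline{\overline{M}}_2=\overline{\overline{M}}_3=\frac{1}{2}$ exactly and the paper's strict inequality fails there. For the upper bound, the paper also routes through Lemma \ref{inequality_lemma}, but with a more economical pairing than yours: it takes the two sequences to be $\Omega_k$ (all $k$-circuit resistances) and $\widetilde{\Psi}_{n-k}$ (series $(n-k)$-biscuit resistances), so each resulting term is
\begin{equation*}
\frac{M_k\,\widetilde{\frak{M}}_{n-k}}{M_k+\widetilde{\frak{M}}_{n-k}} < \frac{5M_k}{2M_k+5} < \frac{5}{2},
\end{equation*}
meaning the only mean that must be known is the biscuit one, $\widetilde{\frak{M}}_{n-k}<\frac{5}{2}$ from Theorem \ref{frakm}; no bound on series-circuit means $\widetilde{M}_i$ is ever needed. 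Your decomposition by the sizes of all series parts does need $\widetilde{M}_i<\frac{5}{2}$, which forces a second pass through the non-rigorous Theorem \ref{the_connection}, and it hits a technical snag the paper's circuit--biscuit pairing sidesteps: when part sizes repeat, a size class is a family of multisets rather than a product grid, so Lemma \ref{inequality_lemma} does not apply verbatim. Finally, both your plan and the paper's proof share the same fundamental gap, which you name correctly: converting the ``on average'' content of \ref{the_connection} (a many-to-one rearrangement map with non-uniform fibers) into an inequality for the uniform average $\overline{\overline{M}}_n$ is asserted rather than proved --- the paper simply states that the weighted average of the first-depth rearrangements ``should be larger than the true average'' --- so neither argument is rigorous beyond that point, and your flagged concern is precisely where the paper itself hand-waves.
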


\begin{proof}
    Any parallel $n$-circuit can be thought of as a rearrangement of a parallel $n$-biscuit, and \ref{the_connection} shows that the average resistance of these rearrangements is higher than that of parallel $n$-biscuits, resulting in the lower bound.
    
    Using \ref{inequality_lemma} and setting $\{ a_i \}$ and $\{ b_i \}$ to $\Omega_k$ and $\widetilde{\Psi}_{n-k}$ for any $1 < k < n$, one can get:
    \begin{equation*}
        \begin{split}
            \frac{1}{Q_k\frak{q}_{n-k}}\sum_{i=1}^{Q_k}{\sum_{j=1}^{\frak{q}_{n-k}} {{\frac{1}{\frac{1}{\gamma_i} + \frac{1}{\beta_j}}}} }  &\le \frac{1}{\frac{1}{M_k} + \frac{1}{\widetilde{\frak{M}}_{n-k}}} = \frac{M_k\widetilde{\frak{M}}_{n-k}}{M_k+\widetilde{\frak{M}}_{n-k}} \\
            \Rightarrow \sum_{i=1}^{Q_k}{\sum_{j=1}^{\frak{q}_{n-k}} {{\frac{1}{\frac{1}{\gamma_i} + \frac{1}{\beta_j}}}} } &\le \frac{Q_k\frak{q}_{n-k}M_k\widetilde{\frak{M}}_{n-k}}{M_k+\widetilde{\frak{M}}_{n-k}}
        \end{split}
    \end{equation*}
    
    Summing the left hand side over all $k$ gives the total resistance of all first-depth rearrangements of all parallel $n$-biscuits except for the parallel connection of $n$ resistors which can be neglected. Note that this includes all parallel $n$-circuits except for the ones that require higher-depth rearrangements. By \ref{the_connection}, however, we know that the weighted average of these terms should be larger than the true average of parallel $n$-circuits. Thus:
    \begin{equation*}
        \begin{split}
            \overline{\overline{M}}_n &\le \frac{1}{\sum_{k=1}^{n-1}{Q_k\frak{q}_{n-k}}}\sum_{k=1}^{n-1}\frac{Q_k\frak{q}_{n-k}M_k\widetilde{\frak{M}}_{n-k}}{M_k+\widetilde{\frak{M}}_{n-k}} \\
            &= \frac{1}{\sum_{k=1}^{n-1}{Q_k2^{-k}}}\sum_{k=1}^{n-1}\frac{2^{-k}Q_kM_k\widetilde{\frak{M}}_{n-k}}{M_k+\widetilde{\frak{M}}_{n-k}} \\
            &< \frac{\sum_{k=1}^{n-1}\frac{5M_k}{2M_k+5}Q_k2^{-k}}{\sum_{k=1}^{n-1}{Q_k2^{-k}}} \le \max{\left\{\frac{5M_k}{2M_k+5}\right\}} < \frac{5}{2}
        \end{split}
    \end{equation*}
\end{proof}

\begin{theorem}[Convergence]
    The average resistance of $n$-circuits converges as $n$ grows. Specifically, 
    \begin{equation*}
        M_n \sim H_{n, \frac{3}{2}}
    \end{equation*}
    where $H_{n, m}$ is the $n$th generalized harmonic number of order $m$.    
\end{theorem}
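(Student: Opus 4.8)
The plan is to start from the exact decomposition $M_n = R_n/Q_n = \sum_{i=1}^n \overline{\overline{R}}_i\, C_i(n)/Q_n$ supplied by (\ref{rsumrp}) and to pass to the limit $n\to\infty$ term by term using Theorem \ref{c/q}, which gives $C_i(n)/Q_n \to 1/(d^i-1)$ for each fixed $i$. Writing $\overline{\overline{R}}_i = \overline{\overline{M}}_i\, q_i$ and invoking the asymptotic $Q_i \sim c\,d^i i^{-3/2}$ from (\ref{asympq}) together with $q_i = \tfrac12(Q_i+\delta_{1i})$, each candidate limit term becomes $\overline{\overline{R}}_i/(d^i-1) \sim \tfrac{c}{2}\,\overline{\overline{M}}_i\, i^{-3/2}$. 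Since Theorem \ref{parallel_upper_bound} bounds $\overline{\overline{M}}_i$ between $\tfrac12$ and $\tfrac52$, these terms are $\Theta(i^{-3/2})$, so the associated series is dominated by a constant multiple of $\sum_i i^{-3/2}=\zeta(\tfrac32)$; this order-$\tfrac32$ decay is exactly what the comparison with the generalized harmonic number $H_{n,\frac32}$ records, and it forces convergence.

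First I would make the term-by-term passage rigorous. The natural tool is Tannery's theorem (dominated convergence for series): I would produce a sequence $g(i)$, independent of $n$, with $\overline{\overline{R}}_i\,C_i(n)/Q_n \le g(i)$ and $\sum_i g(i)<\infty$. Using Corollary \ref{csumq} to write $C_i(n)/Q_n = \sum_{k\ge1} Q_{n-ki}/Q_n$ and the two-sided form $c_1 d^m m^{-3/2}\le Q_m \le c_2 d^m m^{-3/2}$ of (\ref{asympq}), one bounds $Q_{n-ki}/Q_n$ by a constant times $d^{-ki}\,(n/(n-ki))^{3/2}$; for $ki\le n/2$ the last factor is at most $2^{3/2}$ and the geometric sum yields $C_i(n)/Q_n = O(1/(d^i-1))$, whence $\overline{\overline{R}}_i\,C_i(n)/Q_n = O(i^{-3/2})$, a summable bound.

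The hard part is the range $ki>n/2$, i.e.\ the terms with $i$ comparable to $n$. Here the fixed-$i$ limit of Theorem \ref{c/q} is not uniform, and these ``near-diagonal'' terms do not vanish in the aggregate: by Corollary \ref{omcp_n/2}, for $i>n/2$ one has $C_i(n)=Q_{n-i}$, so setting $j=n-i$ the high block equals $\sum_{j\ge0}\overline{\overline{R}}_{n-j}\,Q_j/Q_n$, and each summand tends to $\tfrac{1}{2}\overline{\overline{M}}_\infty\, d^{-j}Q_j$, where $\overline{\overline{M}}_\infty=\lim_i\overline{\overline{M}}_i$. Because $d$ is the reciprocal of the root of $\mathcal{Q}(x)=2$, the series $\sum_{j\ge0}d^{-j}Q_j=\mathcal{Q}(1/d)$ sums to exactly $2$, so this block contributes the constant $\overline{\overline{M}}_\infty$ in the limit. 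I therefore expect $M_n\to \sum_{i\ge1}\overline{\overline{R}}_i/(d^i-1)+\overline{\overline{M}}_\infty$, a finite constant, and the principal obstacles are (i) establishing that $\overline{\overline{M}}_i$ genuinely converges—the excerpt only bounds it, so this likely needs a separate monotonicity or self-consistency argument—and (ii) controlling the middle range $i\approx n/2$ so that nothing is lost between the two blocks.

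Finally, to match the rate encoded by $H_{n,\frac32}$ I would estimate the two tails discarded at stage $n$. The low tail $\sum_{i>n/2}\overline{\overline{R}}_i/(d^i-1)$ and the high-block remainder $\sum_{j>n/2}d^{-j}Q_j$ are each comparable to $\sum_{k>n/2}k^{-3/2}$, since both summands decay like $k^{-3/2}$ (for the latter, $d^{-j}Q_j\sim c\,j^{-3/2}$). Hence $M_\infty-M_n=\Theta(n^{-1/2})$, precisely the rate at which $H_{n,\frac32}$ approaches $\zeta(\tfrac32)$, which is the content of the assertion $M_n\sim H_{n,\frac32}$.
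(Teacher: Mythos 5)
Your plan follows the same skeleton as the paper's proof --- start from (\ref{rsumrp}), divide by $Q_n$, apply Theorem \ref{c/q} and the asymptotic (\ref{asympq}) --- but you carry it much further than the paper does, and in doing so you expose a real defect in the published argument. The paper merely bounds $\overline{\overline{M}}_i<\frac{5}{2}$ via Theorem \ref{parallel_upper_bound}, passes to the limit term by term in $\frac{5}{2}\sum_{i}q_iC_i(n)/Q_n$, and declares victory because $\sum_i i^{-3/2}$ converges; it never justifies the interchange of limit and sum, and that interchange is illegitimate as stated, for exactly the reason you identify: the near-diagonal terms do not vanish. Your computation of that block is correct --- by Corollary \ref{omcp_n/2}, $C_i(n)=Q_{n-i}$ for $i>n/2$, and since $d$ is the reciprocal of a root of $\mathcal{Q}(x)=2$ one has $\sum_{j\ge 0}d^{-j}Q_j=\mathcal{Q}(1/d)=2$, so this block contributes a nonvanishing constant that the paper's term-by-term passage silently discards (its numerical bound $4.3954$, being computed from the full finite sum, does include this contribution, which is why it exceeds what the paper's own limiting argument would predict). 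Your use of Tannery's theorem on the low block and your $O(n^{-1/2})$ estimate for the middle range are sound, and your reading of ``$M_n\sim H_{n,\frac{3}{2}}$'' as a statement about the $\Theta(n^{-1/2})$ rate of convergence is the only defensible one: literal asymptotic equivalence would force $M_n\to\zeta\left(\frac{3}{2}\right)\approx 2.612$, contradicting the paper's own conjecture $M_n\to 1.25$ and the data in Table \ref{tab:data}.

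The genuine gap is the one you flag as obstacle (i): your limit $M_n\to\sum_{i\ge1}\overline{\overline{R}}_i/(d^i-1)+\overline{\overline{M}}_\infty$ presupposes that $\overline{\overline{M}}_i$ converges, and nothing available --- in the paper or in your proposal --- delivers this; Theorem \ref{parallel_upper_bound} gives only $\frac{1}{2}<\overline{\overline{M}}_i<\frac{5}{2}$. This is not a routine side lemma: since $M_n=\frac{1}{2}\left(\widetilde{M}_n+\overline{\overline{M}}_n\right)$ for $n>1$, convergence of $\overline{\overline{M}}_i$ is a statement of essentially the same depth as the theorem itself, and in your decomposition it is precisely what makes the high block settle down. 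So as written, your argument proves that $M_n$ is trapped between convergent bounds --- which, in truth, is also all the paper's own proof establishes --- but not that it converges, and the rate claim inherits the same conditionality. To complete the proof along your lines you would need a monotonicity, contraction, or self-consistency argument for $\overline{\overline{M}}_i$; that is an open problem in this paper's framework, not an exercise, and you were right to single it out as the principal obstacle.
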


\begin{proof}
    An inequality can be obtained by plugging \ref{parallel_upper_bound} in (\ref{rsumrp}):
    \begin{equation*}
        R_n = \sum_{i=1}^n \overline{\overline{R}}_iC_i(n) = \sum_{i=1}^n \overline{\overline{M}}_iq_iC_i(n) < \frac{5}{2}\sum_{i=1}^n q_iC_i(n)
    \end{equation*}
    Dividing both sides by $Q_n$ gives the result:
    \begin{equation*}
        \begin{split}
            M_n < \frac{5}{2}\sum_{i=1}^n \frac{q_iC_i(n)}{Q_n} \\
        \end{split} 
    \end{equation*}
    By \ref{c/q}:

    \begin{equation*}
        \frac{q_iC_i(n)}{Q_n} \rightarrow \frac{q_i}{d^i-1}
    \end{equation*}
    Then using (\ref{asympq}):
    
    \begin{equation*}
        \frac{q_i}{d^i-1} \rightarrow \frac{i^{-\frac{3}{2}}d^i}{d^i-1} = \frac{1}{i^{\frac{3}{2}}(1-d^{-i})}
    \end{equation*}
    Finally, due to the dominance of the exponential decay term:

    \begin{equation*}
        \frac{1}{i^{\frac{3}{2}}(1-d^{-i})} \rightarrow \frac{1}{i^{\frac{3}{2}}} 
    \end{equation*}
    whose sum over the natural numbers indeed converges \textemdash{} to $\zeta\left(\frac{3}{2}\right)$.
\end{proof}
The numerical value of the upper bound is $4.3954$ at $n=2500$.

\section{Final Remarks}
As all the lines of attack tried to gather more information about $M_n$ cannot
possibly be collected in a single paper, we will include the most promising ideas
for further investigation. The following ideas were thoroughly investigated, however, not exhausted. We strongly encourage readers to examine them in hopes of further progress on the limit of $M_n$.

\subsection{Generalization}
The \textit{harmonic operation} $\frac{1}{\frac{1}{x} + \frac{1}{y}}$ can be re-written as $(x^{-1} + y^{-1})^{-\frac{1}{1}}$, and $x + y = (x^1 + y^1)^{\frac{1}{1}}$. This motivates the following generalization, using a similar notion to the power mean: for any $k \in \mathbb{R}$, define the $k$-series connection of a multiset of $m$ unit or $k$-parallel circuits $\{\alpha_1, \alpha_2, \alpha_3, \cdots, \alpha_m \}$ to have a $k$-resistance, $r_k$, of $(r_k(\alpha_1)^k + r_k(\alpha_2)^k + r_k(\alpha_3)^k + \cdots + r_k(\alpha_m)^k)^{\frac{1}{k}}$, and the $k$-parallel connection of a multiset of $n$ unit or $k$-series circuits $\{\beta_1, \beta_2, \beta_3, \cdots, \beta_n \}$ to have a $k$-resistance of $(r_k(\beta_1)^{-k} + r_k(\beta_2)^{-k} + r_k(\beta_3)^{-k} + \cdots + r_k(\beta_n)^{-k})^{-\frac{1}{k}}$; with the \textit{electric circuit} case happening when $k=1$. As 

\begin{equation*}
    \left(\sum_{i=1}^m {r_k(\gamma_i)^{k}}\right)^{\frac{1}{k}} = \left(\sum_{i=1}^m {\left(\frac{1}{r_k(\gamma_i)}\right)^{-k}}\right)^{\frac{1}{k}} = \frac{1}{\left(\sum_{i=1}^m {\left(\frac{1}{r_k(\gamma_i)}\right)^{-k}}\right)^{-\frac{1}{k}}},
\end{equation*}
the main property of the system, \ref{inverse_resistance} remains true and can be proved using the same induction. The significance of this generalization, however, is its confirming of the intuitive convergence value of $M_{\infty} = 1.25$. By numerical analysis on the average $k$-resistance of all $n$-circuits, it is apparent that the limit seems to approach the value at $n = 2$ from above, suggesting that ``$2 = \infty$'', at least when it comes to circuits.

\subsection{More $C_i(n)$ Properties}
The following two theorems are derived in a similar fashion and give more insight into $C_i(n)$, which in many ways is the heart of this paper.

\begin{theorem}
    Although difficult to interpret its meaning,
    \begin{equation*}
        \sum_{i=1}^{n}{C_i(n)} = \sum_{i=1}^{n} {d(i)Q_{n-i}}
    \end{equation*}
    $\forall n \in \mathbb{N}$, where $d(n)$ counts the number of positive divisors of $n$.
\end{theorem}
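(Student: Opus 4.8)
The plan is to reduce the left-hand side to a single double sum over $Q$ and then reorganize it by the product of its two indices, matching the right-hand side term by term. First I would invoke \ref{csumq}, which expresses each $C_i(n)$ purely in terms of $Q$, to rewrite the left-hand side as
\begin{equation*}
    \sum_{i=1}^{n} C_i(n) = \sum_{i=1}^{n} \sum_{k=1}^{\lfloor n/i \rfloor} Q_{n-ki}.
\end{equation*}

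The key observation is that the summand depends on $i$ and $k$ only through the product $m = ik$. I would therefore re-index the double sum by $m$: for each fixed $m$, the term $Q_{n-m}$ is contributed once for every ordered factorization $m = ik$ with $i, k \ge 1$. The constraints $1 \le i$ and $1 \le k \le \lfloor n/i \rfloor$ are together equivalent to the single constraint $ik = m \le n$ (the bound $i \le ik = m \le n$ is then automatic), so $m$ ranges over $1, \dots, n$ and every ordered pair $(i,k)$ with $ik = m$ is counted exactly once. Since the number of such factorizations of $m$ is precisely $d(m)$, collecting terms gives
\begin{equation*}
    \sum_{i=1}^{n} \sum_{k=1}^{\lfloor n/i \rfloor} Q_{n-ki} = \sum_{m=1}^{n} d(m)\, Q_{n-m},
\end{equation*}
which is the claimed identity after renaming $m$ to $i$.

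There is essentially no analytic obstacle here; the entire content is the switch from summing over the pairs $(i,k)$ to summing over the product $m = ik$, a standard divisor-counting maneuver. The one point demanding care — and the only place the argument could go wrong — is verifying that the index ranges transform cleanly: I would check explicitly that the bound $k \le \lfloor n/i \rfloor$ imposes exactly $ik \le n$ with no off-by-one omissions, and that no pair with $ik \le n$ is spuriously excluded or double-counted. Once this bookkeeping is confirmed the identity follows immediately, which also accounts for the author's remark that the identity is "difficult to interpret": it is an arithmetic consequence of the divisor structure of the factorization $m = ik$ rather than a direct combinatorial count of appearances in $\mathcal{O}_n$.
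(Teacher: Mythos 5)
Your proof is correct and is essentially the paper's own argument: both start from \ref{csumq} and observe that, upon summing over $i$, each term $Q_{n-m}$ arises once per ordered factorization $m = ik$, hence $d(m)$ times. Your write-up merely makes explicit the index-range bookkeeping (that $k \le \lfloor n/i \rfloor$ is equivalent to $ik \le n$, and that $i \le m \le n$ is automatic) which the paper's terser proof leaves implicit.
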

\begin{proof}
    In the equation in \ref{csumq}, the terms in the sum appear when the argument of $Q$ is a multiple of $i$ away from $n$. When summing $C_i(n)$ over all $i$, therefore, each $Q_{n-i}$ appears exactly $d(i)$ times, resulting in the above equation.  
\end{proof}
\begin{theorem}
    A more general form of the remark on \ref{csumq} can be written as
    \begin{equation*}
        \sum_{k=1}^{i} C_i(n+k) = \sum_{k=0}^{n} Q_k
    \end{equation*}
    with the $i = 1$ case corresponding to the previous result.
\end{theorem}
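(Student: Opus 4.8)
The plan is to expand the left-hand side using the closed form for $C_i$ from \ref{csumq} and then recognize a clean reindexing of the resulting double sum. Writing $C_i(n+k) = \sum_{j=1}^{\lfloor (n+k)/i \rfloor} Q_{n+k-ij}$, the left-hand side becomes
\[
\sum_{k=1}^{i} C_i(n+k) = \sum_{k=1}^{i}\sum_{\substack{j \geq 1 \\ n+k-ij \geq 0}} Q_{n+k-ij},
\]
where I have rewritten the upper limit $\lfloor (n+k)/i\rfloor$ as the explicit nonnegativity constraint $n+k-ij \geq 0$ on the index of $Q$, which is exactly equivalent.

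The heart of the argument is the substitution $t = ij - k$, so that the index of each summand becomes $n+k-ij = n - t$. I claim that as $(k,j)$ ranges over $\{1,\dots,i\} \times \{1,2,\dots\}$, the quantity $t = ij-k$ runs through every nonnegative integer exactly once. This is just a division-algorithm statement: for each fixed $t \geq 0$, the block of $i$ consecutive integers $\{t+1, t+2, \dots, t+i\}$ contains a unique multiple of $i$, say $ij$, and setting $k = ij - t$ yields the unique pair with $1 \le k \le i$ and $j \geq 1$. Under this bijection the constraint $n+k-ij \geq 0$ becomes precisely $t \le n$, so the admissible pairs correspond bijectively to $t \in \{0,1,\dots,n\}$. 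Hence the double sum collapses to $\sum_{t=0}^{n} Q_{n-t} = \sum_{k=0}^{n} Q_k$, which is the claimed identity. As a sanity check, setting $i=1$ leaves the single term $C_1(n+1) = \sum_{k=0}^{n} Q_k$, recovering \ref{C1}.

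The only genuinely delicate point is the reindexing claim, namely verifying that $(k,j) \mapsto ij-k$ is a bijection from $\{1,\dots,i\}\times\mathbb{Z}_{\geq 1}$ onto $\mathbb{Z}_{\geq 0}$ and that it carries the truncation condition of \ref{csumq} exactly onto $t \le n$ (so that no term is double-counted or silently dropped). I would isolate this as a one-line lemma, or argue it inline via the Euclidean decomposition, and explicitly check the boundary case $t=0$ (which forces $j=1$, $k=i$) to confirm the endpoints match. Everything after the bijection is purely mechanical substitution.
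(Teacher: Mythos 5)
Your proof is correct and follows essentially the same route as the paper: expand each $C_i(n+k)$ via \ref{csumq} and collapse the resulting double sum by a division-algorithm reindexing. Your explicit bijection $(k,j)\mapsto ij-k$ from $\{1,\dots,i\}\times\mathbb{Z}_{\geq 1}$ onto $\mathbb{Z}_{\geq 0}$ is in fact a cleaner and more careful rendering of this step than the paper's own write-up, which merges the sums by a loose interchange of summation order and contains index slips along the way.
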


\begin{proof} Using \ref{csumq} again, one can notice the division algorithm and merge the sums. A final re-indexing gives the desired result.
    \begin{equation*}
        \begin{split}
            &\sum_{k=0}^{i-1}C_i(n+k) = \\
            \sum_{k=0}^{i-1} \sum_{j=1}^{\lfloor \frac{n+k}{i} \rfloor} Q_{n+k-ij} =\sum_{k=0}^{i-1} \sum_{ij\le n+k}&Q_{n+k-ij} = \sum_{ij\le n+i-1}\sum_{k=0}^{i-1} Q_{n+k-ij} = \sum_{k=0}^{n+i} Q_k\\
            \Rightarrow \sum_{k=0}^{i-1} C_i(n-&i+k) = \sum_{k=0}^{n} Q_k = \sum_{k=1}^{i} C_i(n+k)\\
        \end{split}
    \end{equation*}
\end{proof}

\section*{Conclusion}
Introducing $C_i(n)$ allowed us to look at the previously investigated series-parallel networks in a new way, opening some new doors to exploring the behavior of the resistances. Simplifying down to biscuits allowed us to further analyze the desired system, leading us to the main result of
\begin{equation*}
    1 < \lim\limits_{n \rightarrow \infty} {M_n} < 4.3954
\end{equation*}
and the two other averages that we found along the way: by \ref{inverse_resistance}, the harmonic mean of all $n$-circuits approaches a number between $\frac{1}{4.3954} = 0.228$ and $1$, and their geometric mean is always $1$ by the argument used in \ref{lowerbound}. Nevertheless, we did not find an exact limit. Computationally acquired data suggests that $M_n$ tends towards $1.25$, and the investigation of the $k$-resistance of $n$-circuits seems promising and may provide further evidence for our conjecture.

\newpage
\bibliographystyle{unsrtnat}
\nocite{*}
\bibliography{main}

\section*{Acknowledgments}
We thank Dr. Honeywill for supporting us throughout our exploration.

\section*{Appendix}
\begin{table}[H]
\centering
\begin{tabular}{c||c|c|c|c|c}
    $n$ & $Q_n$ & $R_n$ & $M_n$ & $\widetilde{R}_n$ & $\overline{\overline{R}}_n$  \\ 
    \hline
    \hline
      1 & 1 & $1.00\times10^{0}$ & 1.000 & $1.00\times10^{0}$ & $1.00\times10^{0}$ \\ 
      2 & 2 & $2.50\times10^{0}$ & 1.250 & $2.00\times10^{0}$ & $0.50\times10^{0}$ \\ 
      3 & 4 &  $5.50\times10^{0}$ & 1.375 & $4.50\times10^{0}$ & $1.00\times10^{0}$ \\ 
      4 & 10 & $1.30\times10^{1}$ & 1.350 & $1.05\times10^{1}$ & $3.00\times10^{0}$ \\
      5 & 24 & $3.26\times10^{1}$ & 1.357 & $2.55\times10^{1}$ & $7.06\times10^{0}$ \\
      6  & 66 & $8.73\times10^{1}$ & 1.323 & $6.66\times10^{1}$ & $2.07\times10^{1}$ \\
      7 & 180 & $2.36\times10^{2}$ & 1.313 & $1.80\times10^{2}$ & $5.60\times10^{1}$ \\ 
      8 & 522 & $6.77\times10^{2}$ & 1.298 & $5.13\times10^{2}$ & $1.65\times10^{2}$ \\ 
      9 & 1532 & $1.98\times10^{3}$ & 1.290 & $1.49\times10^{3}$ & $4.85\times10^{2}$ \\ 
      10 & 4624 & $5.93\times10^{3}$ & 1.283 & $4.46\times10^{3}$ & $1.47\times10^{3}$  \\
      11 & 14136 & $1.81\times10^{4}$ & 1.278 & $1.36\times10^{4}$ & $4.50\times10^{3}$ \\
      12 & 43930 & $5.60\times10^{4}$ & 1.274 & $4.20\times10^{4}$ & $1.40\times10^{4}$ \\
      13 & 137908 & $1.75\times10^{5}$ & 1.272 & $1.31\times10^{6}$ & $4.40\times10^{4}$ \\ 
      14 & 437502 & $5.55\times10^{5}$ & 1.269 & $4.15\times10^{5}$ & $1.40\times10^{5}$ \\ 
      15 & 1399068 & $1.77\times10^{6}$ & 1.267 & $1.33\times10^{6}$ & $4.47\times10^{5}$ \\ 
      16 & 4507352 & $5.71\times10^{6}$ & 1.266 & $4.26\times10^{6}$ &  $1.44\times10^{6}$ \\
      17 & 14611576 & $1.85\times10^{7}$ & 1.264 & $1.38\times10^{7}$ & $4.68\times10^{6}$ \\
      18 & 47633486 & $6.02\times10^{7}$ & 1.263 & $4.49\times10^{7}$ & $1.53\times10^{7}$ \\
      19 & 156047204 & $1.97\times10^{8}$ & 1.262 & $1.47\times10^{8}$ & $5.00\times10^{7}$ \\ 
      20 & 513477502 & $6.48\times10^{8}$ & 1.261 & $4.83\times10^{8}$ &  $1.65\times10^{8}$ \\ 
      21 & 1696305728 & $2.14\times10^{9}$ & 1.261 & $1.59\times10^{9}$ & $5.44\times10^{8}$ \\ 

\end{tabular}
    \caption{The number of $n$-circuits, the total resistance of $n$-circuits, the average resistance of $n$-circuits, the total resistance of series $n$-circuits, and the total resistance of parallel $n$-circuits, up to $n=21$.}
    \label{tab:data}
\end{table}

\end{document}